\theoremstyle{plain}
\numberwithin{equation}{section}
\newtheorem{thm}{Theorem}[section]
\newtheorem{theorem}[thm]{Theorem}
\newtheorem{lemma}[thm]{Lemma}
\newtheorem{corollary}[thm]{Corollary}
\newcommand{\N}{{\mathbb N}}
\begin{document}

\begin{center}
\textbf{\large The general case on the order of appearance of product of consecutive Fibonacci and Lucas numbers}
\end{center}

\begin{center}
Narissara Khaochim and Prapanpong Pongsriiam

Department of Mathematics, Faculty of Science\\
                Silpakorn University\\
                Nakhon Pathom\\
               73000, Thailand
							
				narissara.khaochim@gmail.com 
							
				prapanpong@gmail.com			
\end{center}

\begin{abstract}
Let $F_{n}$ and $L_n$ be the $n$th Fibonacci and Lucas number, respectively. For each positive integer $m$, the order of appearance of $m$ in the Fibonacci sequence, denoted by $z(m)$, is the smallest positive integer $k$ such that $m$ divides $F_k$. Recently, D. Marques has obtained a formula for $z(F_{n}F_{n+1})$, $z(F_{n}F_{n+1}F_{n+2})$, and $z(F_{n}F_{n+1}F_{n+2}F_{n+3})$. In this paper, we extend Marques' result to the case $z(F_{n}F_{n+1}\cdots F_{n+k})$ for every $4\leq k \leq 6$. We also give a formula for $z(L_nL_{n+1}\cdots L_{n+k})$ when $k = 5,6$ which extends the recent result of Marques and Trojovsk\'y. Our method gives a general idea on how to obtain the formulas for $z(F_nF_{n+1}\cdots F_{n+k})$ and $z(L_nL_{n+1}\cdots L_{n+k})$ for every $k\geq 1$. 
\end{abstract}

2010 Mathematics Subject Classification. 11B39, 11Y55.\\
Key words and phrases. Fibonacci number, Lucas number, least common multiple, the order of appearance

\section{Introduction}
Throughout this article, we write $(a_1,a_2,\ldots,a_k)$ and $[a_1,a_2,\ldots,a_k]$ for the greatest common divisor and the least common multiple of $a_1, a_2, \ldots, a_k$, respectively.

The Fibonacci sequence $\left(F_n\right)_{n\geq 1}$ is defined by $F_1=F_2=1$ and $F_n=F_{n-1}+F_{n-2}$ for $n\geq 3$. The Lucas sequence $(L_n)_{n\geq 1}$ is given by the same recursive pattern as the Fibonacci sequence but with the initial values $L_1 = 1$ and $L_2 = 3$. For each $m\in \N$, the order of appearance of $m$ in the Fibonacci sequence, denoted by $z(m)$, is the smallest positive integer $k$ such that $m$ divides $F_{k}$. The divisibility property of Fibonacci numbers and the behavior of the order of appearance have been a popular area of research, see \cite{BeandQu, Gr, Ha, Ka, Ko, PP1, PP2, PP3, Ro, Va, Vi, Wa} and references therein for additional details and history. Recently, D. Marques \cite{Ma1,Ma2,Ma3,Ma4,Ma5} has obtained formulas for $z(m)$ for various types of $m$. In particular, Marques \cite{Ma1, Ma4} obtains the formulas for $z(F_{n}F_{n+1})$, $z(F_{n}F_{n+1}F_{n+2})$, $z(F_{n}F_{n+1}F_{n+2}F_{n+3})$, $z(L_nL_{n+1})$, $z(L_nL_{n+1}L_{n+2})$, and $z(L_nL_{n+1}L_{n+2}L_{n+3})$. Marques and Trojovsk\'y \cite{MTr} also compute a formula for $z(L_nL_{n+1}\cdots L_{n+4})$.

In this article, we extend those results to the case $z(F_{n}F_{n+1}\cdots F_{n+k})$ for every $4\leq k \leq 6$ and $z(L_nL_{n+1}\cdots L_{n+k})$ for $k = 5, 6$. Our method is simpler and gives a general idea on how to obtain the formulas of $z(F_{n}F_{n+1}\cdots F_{n+k})$ and  $z(L_nL_{n+1}\cdots L_{n+k})$ for every $k\geq 1$. The general cases of $z(F_nF_{n+1}\cdots F_{n+k})$ and $z(L_nL_{n+1}\cdots L_{n+k})$ are of interest to us since they are connected to the widely study of the least common multiple of consecutive integers, which is initiated by Chebyshev for the first significant attempt to prove the prime number theorem. We refer the reader to the articles by Farhi \cite{Far2}, Farhi and Kane \cite{Far}, Hong, Luo, Qian, and Wang \cite{HLQW}, and Hong and Qian \cite{HQ} and references therein for the investigation of the least common multiple of some finite sequences of integers. 

We arrange the article as follows. In the next section, we give some auxiliary results that are useful for the proof of main theorems. Then we calculate $z(F_nF_{n+1}\cdots F_{n+k})$ in Section \ref{sec3}, and $z(L_nL_{n+1}\cdots L_{n+k})$ in Section \ref{sec4}. Since the method is the same, we show full details in the case $z(F_nF_{n+1}\cdots F_{n+k})$ and give only an outline in the other case.
\section{Auxiliary Results}
In this section, we give some lemmas that will be used in the proof of the main theorems. First we recall the following well-known results \cite{Gr,Ka,Ko,Va} which will be applied throughout this article :
\begin{equation}\label{eq:1}
\text{For} \ n\geq3, m\geq1, \ F_{n}\mid F_{m} \ \text{if and only if} \ n\mid m. 
\end{equation}
\begin{equation}\label{eq:gcdF}
\text{For} \ m,n\geq1, \ (F_{m},F_{n})=F_{(m,n)}. 
\end{equation}
We will need to calculate 2-adic and 3-adic orders of Fibonacci numbers and the next lemma will be useful. 
\begin{lemma} \label{p-adic} (Lengyel \cite{Le})
For each $n\geq1$, let $v_{p}(n)$ be the p-adic order of $n$. Then
\[
v_{2}(F_{n})=\begin{cases}
0, & if \ n\equiv1,2\pmod {3};\\
1, & if \ n\equiv3\pmod {6};\\
v_2(n)+2, & if \ n\equiv0\pmod {6},\\
\end{cases}
\]
$v_{5}(F_n)=v_{5}(n)$, and if p is a prime, $p\not=2$, and $p\not=5$, then
\[
v_{p}(F_{n})=\begin{cases}
v_{p}(n)+v_{p}(F_{z(p)}), & if \ n\equiv0\pmod {z(p)};\\
0, & if \ n\not\equiv0\pmod {z(p)}.\\
\end{cases}
\]
In particular, 
\[
v_{3}(F_{n})=\begin{cases}
v_{3}(n)+1, & if \ n\equiv0\pmod {4};\\
0, & if \ n\not\equiv0\pmod {4}.\\
\end{cases}
\]
\end{lemma}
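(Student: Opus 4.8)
The plan is to establish this as a lifting-the-exponent result, organized around the rank of apparition $z(p)$. The heart of the matter is the step-up identity: for an odd prime $p\neq5$,
\[
v_p(F_{pn})=v_p(F_n)+1 \qquad \text{whenever } p\mid F_n.
\]
Granting it, the generic third case of the lemma follows quickly. By \eqref{eq:1}, $v_p(F_n)>0$ if and only if $z(p)\mid n$, which is the vanishing case; so assume $e:=z(p)\mid n$. Since $z(p)$ is classically prime to $p$, write $n=e\,p^{s}u$ with $p\nmid u$ and $s=v_p(n)$. From \eqref{eq:gcdF}, $(F_{eu},F_{ep})=F_{(eu,ep)}=F_e$, so $\min\{v_p(F_{eu}),v_p(F_{ep})\}=v_p(F_e)$; since the step-up identity at $n=e$ gives $v_p(F_{ep})=v_p(F_e)+1>v_p(F_e)$, we get $v_p(F_{eu})=v_p(F_e)$. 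Applying the step-up identity $s$ further times, each time legitimately since $e$ divides the index $eup^{j}$, yields
\[
v_p(F_n)=v_p(F_{eup^{s}})=v_p(F_e)+s=v_p(F_{z(p)})+v_p(n),
\]
as claimed. The statement for $v_3$ is then the special case $z(3)=4$ and $v_3(F_4)=v_3(3)=1$, read off from $F_1=F_2=1$, $F_3=2$, $F_4=3$.

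To prove the step-up identity I would pass to the Binet form. Put $\alpha,\beta=(1\pm\sqrt5)/2$, $x=\alpha^{n}$, $y=\beta^{n}$, so that $x-y=\sqrt5\,F_n$ and $v_p(x-y)=v_p(F_n)\ge1$ in the ring of integers of $\Q(\sqrt5)$; here $p\neq5$ is exactly what is needed, since $5$ ramifies while every other prime does not. Substituting $y=x-(x-y)$ and using the hockey-stick identity $\sum_{j=\ell}^{p-1}\binom{j}{\ell}=\binom{p}{\ell+1}$ to collapse the resulting double expansion gives
\[
\frac{F_{pn}}{F_n}=\frac{x^{p}-y^{p}}{x-y}=\sum_{r=1}^{p}(-1)^{r-1}\binom{p}{r}(x-y)^{r-1}x^{p-r}.
\]
The $r=1$ term is $p\,x^{p-1}$, of $p$-adic valuation exactly $1$ since $\alpha$ is a unit; each term with $2\le r\le p-1$ has valuation $\ge1+v_p(x-y)\ge2$ because $p\mid\binom{p}{r}$; and the $r=p$ term has valuation $(p-1)v_p(x-y)\ge2$. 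Hence $v_p(F_{pn}/F_n)=1$.

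It remains to handle the two exceptional primes, where the uniform machinery no longer applies. For $p=5$, expand the odd part of $(1+\sqrt5)^{n}$ to get $2^{n-1}F_n=\sum_{k\ge0}\binom{n}{2k+1}5^{k}$; the $k=0$ term equals $n$, and for $k\ge1$ one has $v_5\!\binom{n}{2k+1}\ge v_5(n)-v_5(2k+1)$ and $2k+1<5^{k}$, so that term has $5$-adic valuation $\ge v_5(n)+1$, whence $v_5(F_n)=v_5(2^{n-1}F_n)=v_5(n)$. The prime $p=2$ is the main obstacle: the value of $v_2(F_n)$ splits three ways according to $n\bmod6$, reflecting carries in base $2$, and it is not produced by the step-up mechanism. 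For it I would use $F_{2m}=F_mL_m$ and $L_{2m}=L_m^{2}-2(-1)^{m}$. From the periodicity of $(F_n)$ modulo $4$ one reads off $v_2(F_n)=0$ for $n\equiv1,2\pmod3$ and $v_2(F_n)=1$ for $n\equiv3\pmod6$, and from that of $(L_n)$ modulo $8$ one gets $v_2(L_n)=1$ for $n\equiv0\pmod6$ and $v_2(L_n)=2$ for $n\equiv3\pmod6$; finally, for $n\equiv0\pmod6$, an induction on $v_2(n)$, writing $n=2m$ and using $v_2(F_n)=v_2(F_m)+v_2(L_m)$ with base case $m\equiv3\pmod6$, gives $v_2(F_n)=v_2(n)+2$. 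Tracking these three regimes by hand is the only part of the argument not absorbed by the general mechanism, and is where the bookkeeping is heaviest.
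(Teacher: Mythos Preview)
The paper does not prove Lemma~\ref{p-adic}; it is quoted as a result of Lengyel~\cite{Le} and used as a black box. So there is no ``paper's own proof'' to compare against.

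Your argument is correct and self-contained. The derivation of the step-up identity via the Binet form in $\Q(\sqrt5)$ is clean, and your passage from it to the general formula is sound: the key points are that $p\nmid z(p)$ for $p\neq5$ (since $z(p)\mid p\pm1$), and that $\gcd(F_{eu},F_{ep})=F_e$ forces $v_p(F_{eu})=v_p(F_e)$ once you know $v_p(F_{ep})>v_p(F_e)$. Your treatment of $p=5$ via $2^{n-1}F_n=\sum_{k\ge0}\binom{n}{2k+1}5^{k}$ and the Kummer-type bound $v_5\binom{n}{2k+1}\ge v_5(n)-v_5(2k+1)$ is fine (the inequality $2k+1<5^{k}$ for $k\ge1$ indeed gives $k-v_5(2k+1)\ge1$). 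The $p=2$ case is handled correctly by the factorization $F_{2m}=F_mL_m$ together with the residues of $F_n\bmod4$ and $L_n\bmod8$, and the induction on $v_2(n)$ closes with the base case $m\equiv3\pmod6$ giving $1+2=3=v_2(n)+2$ and the inductive step adding $v_2(L_m)=1$ at each doubling. One very small remark: when you work in $\Q(\sqrt5)$ you are implicitly fixing an extension of $v_p$; since the quantity $F_{pn}/F_n$ whose valuation you compute is a rational integer, the normalization does not matter, but it is worth a word.
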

We will also need to calculate the least common multiple of consecutive integers such as $[n,n+1,n+2,n+3,n+4]$. It is not difficult to compute directly the formula for  $[n,n+1,\ldots,n+k]$ in terms of $n,n+1,\ldots,n+k$ for $1\leq k \leq 6$. But it is more convenient to apply the result of Farhi and Kane \cite{Far} on the recursive relation of the function $g_{k}:\N \rightarrow \N$ given by 
\begin{equation} \label{eq:fh}
g_{k}(n)=\frac{n(n+1)\cdots(n+k)}{[n,n+1,\ldots,n+k]}.
\end{equation}
\begin{lemma} \label{Farhi} (Farhi and Kane\cite{Far})
For each $k\in \N \cup \left\{0\right\}$, let $g_k$ be the function defined by (\ref{eq:fh}). Then $g_0(n)=g_1(n)=1$ for every $n \in \N$ and $g_k$ satisfies the recursive relation
$$
g_{k}(n)=(k!,(n+k)g_{k-1}(n)) \ \text{for all $k,n\in \N$.}
$$
\end{lemma}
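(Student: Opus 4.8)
The plan is to dispose of the base cases by hand and then prove the recursion one prime at a time. For the base cases, $g_0(n)=n/[n]=1$, and since consecutive integers are coprime, $[n,n+1]=n(n+1)$, so $g_1(n)=1$. For the recursion, fix $k,n\in\N$ and a prime $p$, and put $a_i=v_p(n+i)$ for $0\le i\le k$. Using that $v_p$ of a least common multiple is the maximum of the individual $v_p$'s, I get
\[
v_p\bigl(g_k(n)\bigr)=\sum_{i=0}^{k}a_i-\max_{0\le i\le k}a_i,\qquad v_p\bigl((n+k)g_{k-1}(n)\bigr)=\sum_{i=0}^{k}a_i-\max_{0\le i\le k-1}a_i .
\]
Hence the asserted identity reduces to the numerical claim $v_p(g_k(n))=\min\bigl(v_p(k!),\,v_p((n+k)g_{k-1}(n))\bigr)$ for every prime $p$; once this is known, comparing $p$-adic valuations across all $p$ gives $g_k(n)=(k!,(n+k)g_{k-1}(n))$.

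One direction is routine. Because $\max_{0\le i\le k}a_i\ge\max_{0\le i\le k-1}a_i$, we immediately get $v_p(g_k(n))\le v_p((n+k)g_{k-1}(n))$. For $v_p(g_k(n))\le v_p(k!)$ (equivalently $g_k(n)\mid k!$) I would count multiples of prime powers: writing $N_j=\#\{0\le i\le k:\ p^{\,j}\mid n+i\}$, one has $\sum_{i=0}^k a_i=\sum_{j\ge1}N_j$ and $\max_{0\le i\le k}a_i=\#\{j\ge1:\ N_j\ge1\}$, so $v_p(g_k(n))=\sum_{j:\ N_j\ge1}(N_j-1)$; since among the $k+1$ consecutive integers $n,n+1,\dots,n+k$ at most $\lfloor k/p^{\,j}\rfloor+1$ are divisible by $p^{\,j}$, each summand is at most $\lfloor k/p^{\,j}\rfloor$, and summing yields $v_p(g_k(n))\le\sum_{j\ge1}\lfloor k/p^{\,j}\rfloor=v_p(k!)$ by Legendre's formula.

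The heart of the matter is the reverse inequality, and for this it is enough to show that whenever $v_p(g_k(n))<v_p((n+k)g_{k-1}(n))$ one in fact has $v_p(g_k(n))=v_p(k!)$. By the two displayed formulas, that hypothesis says precisely $a_k>\max_{0\le i\le k-1}a_i$, i.e.\ $n+k$ is the \emph{only} multiple of $p^{\,a_k}$ among $n,n+1,\dots,n+k$. Then the previous multiple of $p^{\,a_k}$, namely $n+k-p^{\,a_k}$, must lie strictly below $n$, forcing $p^{\,a_k}>k$; and for each $j\le a_k$ we have $p^{\,j}\mid n+k$, so the multiples of $p^{\,j}$ in the window are exactly $n+k,\,n+k-p^{\,j},\dots$ down to $n$, giving $N_j=\lfloor k/p^{\,j}\rfloor+1$, while $N_j=0$ for $j>a_k$ since $a_k$ is the strict maximum. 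Substituting into $v_p(g_k(n))=\sum_{j=1}^{a_k}(N_j-1)=\sum_{j=1}^{a_k}\lfloor k/p^{\,j}\rfloor$ and noting $\lfloor k/p^{\,j}\rfloor=0$ for $j\ge a_k$ (as $p^{\,a_k}>k$) turns this into $\sum_{j\ge1}\lfloor k/p^{\,j}\rfloor=v_p(k!)$, as required. Combining the two directions gives $v_p(g_k(n))=\min\bigl(v_p(k!),\,v_p((n+k)g_{k-1}(n))\bigr)$ in all cases, which finishes the proof. The one genuinely delicate point is the passage from ``$n+k$ is the unique multiple of $p^{\,a_k}$ in the window'' to ``$p^{\,a_k}>k$ and every smaller power of $p$ attains its maximal possible multiplicity in the window''; everything else is bookkeeping with valuations.
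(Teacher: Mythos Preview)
Your argument is correct. The paper does not supply its own proof of this lemma; it simply quotes the result from Farhi and Kane \cite{Far} and uses it as a black box in Lemma~\ref{lcmN}. So there is nothing in the paper to compare your method against.

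Your prime-by-prime valuation approach is clean and self-contained. The key identities $v_p(g_k(n))=\sum_{i=0}^k a_i-\max_i a_i$ and $v_p((n+k)g_{k-1}(n))=\sum_{i=0}^k a_i-\max_{i<k}a_i$ are correct, the bound $v_p(g_k(n))\le v_p(k!)$ via the counts $N_j$ and Legendre's formula is standard, and the crucial case analysis---when $a_k$ strictly dominates, forcing $p^{a_k}>k$ and hence $N_j=\lfloor k/p^{\,j}\rfloor+1$ exactly for all $j\le a_k$---is handled correctly. The only cosmetic remark is that the phrase ``$\lfloor k/p^{\,j}\rfloor=0$ for $j\ge a_k$'' already includes $j=a_k$ (since $p^{a_k}>k$), so the upper limit $a_k$ in your sum is harmless; you have in effect shown $\sum_{j=1}^{a_k}\lfloor k/p^{\,j}\rfloor=\sum_{j\ge1}\lfloor k/p^{\,j}\rfloor$.
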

Let $a, b, c$ be positive integers. Recall the basic results in elementary number theory that if $(a,b)=1$, then $(c,ab)=(c,a)(c,b)$, and $(a,bc)=(a,c)$. In addition, $\left((a,b),c\right)=(a,b,c)$, $(a,b) = (b,a)$, $(ca,cb)=c(a,b)$, and if $a\equiv b \pmod c$, then $(a,c)=(b,c)$. Combining these and Lemma \ref{Farhi}, we obtain the following result.
\begin{lemma} \label{lcmN} 
For each $n\in \N$, the following holds.
\begin{align*}
[n,n+1]&=n(n+1), \\ 
[n,n+1,n+2]&=\frac{n(n+1)(n+2)}{(2,n)}, \\
[n,n+1,n+2,n+3]&=\frac{n(n+1)(n+2)(n+3)}{2(3,n)}, \\
[n,n+1,n+2,n+3,n+4]&=\frac{n(n+1)(n+2)(n+3)(n+4)}{2(4,n)(3,n(n+1))}, \\
[n,n+1,n+2,n+3,n+4,n+5]&=\frac{n(n+1)(n+2)(n+3)(n+4)(n+5)}{6(5,n)(4,n(n+1))}, \\
[n,n+1,n+2,n+3,n+4,n+5,n+6]&=\frac{n(n+1)(n+2)(n+3)(n+4)(n+5)(n+6)}{12(3,n)(5,n(n+1))\left(4,(n+2)\left(2,\frac{n(n+1)}{2}\right)\right)}.
\end{align*}
\end{lemma}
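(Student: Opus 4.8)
The plan is to read off all six identities from the Farhi--Kane recursion in Lemma~\ref{Farhi}. By the definition \eqref{eq:fh} it suffices to compute the single quantity $g_k(n)$ for $1\le k\le 6$ and then divide $n(n+1)\cdots(n+k)$ by it; the six denominators in the statement are precisely the values of $g_k(n)$. Starting from $g_0(n)=g_1(n)=1$, I would compute $g_2,g_3,g_4,g_5,g_6$ in turn, each time expanding
\[
g_k(n)=\bigl(k!,\,(n+k)\,g_{k-1}(n)\bigr)
\]
and simplifying with the elementary gcd rules recalled just before the statement. The first step is immediate, $g_2(n)=(2,n+2)=(2,n)$, which already gives $[n,n+1]=n(n+1)$ and $[n,n+1,n+2]=n(n+1)(n+2)/(2,n)$.

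Each further step follows the same pattern. First pull out of $k!$ the part that is forced to divide $(n+k)g_{k-1}(n)$ for every $n$, using $(ca,cb)=c(a,b)$; the constants extracted this way are $2,2,6,12$, exactly the integers appearing in the four remaining denominators. What is left is a gcd of the shape $\bigl(M,(n+k)\cdot(\text{factors inherited from }g_{k-1})\bigr)$ with $M$ a product of pairwise coprime prime powers, so repeated use of $(c,ab)=(c,a)(c,b)$ splits it into one piece per prime of $M$. Inside each piece an inherited factor such as $(3,n)$ or $(5,n)$ that is coprime to the modulus disappears via $(a,bc)=(a,c)$, and $n+k$ may be replaced by a smaller residue via $a\equiv b\pmod c\Rightarrow(a,c)=(b,c)$. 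The odd-prime pieces then collapse almost immediately into short expressions: the $5$-piece to $(5,n)$ or $(5,n(n+1))$, the $3$-piece to $2(3,n)$, $(3,n(n+1))$, the constant $3$, or $(3,n)$.

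The only genuinely delicate part is the $2$-adic piece, and it is the source of the awkward factor $\bigl(4,(n+2)(2,\tfrac{n(n+1)}{2})\bigr)$ in the $k=6$ denominator; I expect $k=6$ to be the main obstacle. For $k=5$ the inherited factor $(4,n)$ is no longer coprime to the relevant modulus, so one has to check $\bigl(4,(n+5)(4,n)\bigr)=\bigl(4,n(n+1)\bigr)$ by a short split on the parity of $n$, and this is what puts $(4,n(n+1))$ into the $k=5$ formula. For $k=6$ the cleanest route is to rewrite $g_5$ beforehand as $g_5(n)=12(5,n)\bigl(2,\tfrac{n(n+1)}{2}\bigr)$, using the identity $\bigl(4,n(n+1)\bigr)=2\bigl(2,\tfrac{n(n+1)}{2}\bigr)$ (valid since $n(n+1)$ is even, by $(2a,2b)=2(a,b)$), so that
\[
g_6(n)=12\Bigl(60,\ (n+6)(5,n)\bigl(2,\tfrac{n(n+1)}{2}\bigr)\Bigr);
\]
splitting over $60=4\cdot3\cdot5$, the $3$- and $5$-pieces give $(3,n)$ and $(5,n(n+1))$ as before, while the $4$-piece, after discarding the odd factor $(5,n)$ and replacing $n+6$ by $n+2$ modulo $4$, is exactly $\bigl(4,(n+2)(2,\tfrac{n(n+1)}{2})\bigr)$. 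Reassembling the three pieces and dividing $n(n+1)\cdots(n+6)$ by the product gives the last formula; the whole argument uses nothing beyond Lemma~\ref{Farhi} and the gcd identities listed above.
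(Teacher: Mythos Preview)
Your proposal is correct and follows essentially the same approach as the paper: both compute $g_k(n)$ via the Farhi--Kane recursion, extract the forced common factor from $k!$, split the residual gcd over the coprime prime-power parts, and simplify each piece with the listed gcd identities. The only differences are tactical---the paper handles $(4,(n+5)(4,n))$ via $(c,a)(c,b)$-type identities rather than a parity split, and for $k=6$ it pulls out $6$ first and works with an $(8,\cdot)$ factor instead of rewriting $g_5$ beforehand---but the overall structure and level of detail match.
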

\begin{proof}
By the definition of the function $g_{k}(n)$, we obtain that $[n,n+1,\ldots ,n+k]=\frac{n(n+1)\cdots(n+k)}{g_{k}(n)}$. So we only need to find $g_{k}(n)$ for $k=1,2,3,4,5,6$. Since each case is similar, we will only give the proof in the cases $k=5,6$ assuming that the cases $k=1,2,3,4$ are already obtained.\\
\textbf{Case 1 : $k=5$}. Assuming that the case $k=4$ is proved, we have $g_{4}(n)=2(4,n)(3,n(n+1))$ and we obtain by Lemma \ref{Farhi} that
\begin{align*}
g_{5}(n)&=(5!,(n+5)g_{4}(n)) \\
&=(5!,2(n+5)(4,n)(3,n(n+1))) \\
&=2(5\cdot4\cdot3,(n+5)(4,n)(3,n(n+1))) \\
&=2(5,n+5)(4,(n+5)(4,n))(3,(n+5)(3,n(n+1))) \\
&=2(5,n)(4,(n+1)(4,n))(3,3(n+5),n(n+1)(n+5)) \\
&=2(5,n)(4,4(n+1),n(n+1))(3,n(n+1)(n+5)) \\
&=2(5,n)(4,n(n+1))3 \\
&=6(5,n)(4,n(n+1)). 
\end{align*}
\textbf{Case 2 : $k=6$}. \ We have
\begin{align*}
g_{6}(n)&=(6!,(n+6)g_{5}(n)) \\
&=(6!,6(n+6)(5,n)(4,n(n+1))) \\
&=6(8\cdot5\cdot3,(n+6)(5,n)(4,n(n+1))) \\
&=6(8,(n+6)(4,n(n+1)))(5,(n+6)(5,n))(3,n+6) \\
&=6(8,(n+6)(4,n(n+1)))(5,(n+1)(5,n))(3,n) \\
&=6(8,(n+6)(4,n(n+1)))(5,5(n+1),n(n+1))(3,n) \\
&=12\left(4,(n+6)\left(2,\frac{n(n+1)}{2}\right)\right)(5,n(n+1))(3,n) \\
&=12\left(4,(n+2)\left(2,\frac{n(n+1)}{2}\right)\right)(5,n(n+1))(3,n).
\end{align*} 
\end{proof}
Next we calculate the least common multiple of consecutive Fibonacci numbers.
\begin{lemma} \label{lcmF} 
For each $n\in \N$, the following holds.\\
\begin{itemize}
\item[(i)] $[F_n,F_{n+1}]=F_nF_{n+1}$. \\ 
\item[(ii)] $[F_n,F_{n+1},F_{n+2}]=F_nF_{n+1}F_{n+2}$. \\
\item[(iii)] $[F_n,F_{n+1},F_{n+2},F_{n+3}]=\frac{F_nF_{n+1}F_{n+2}F_{n+3}}{F_{(n,3)}}$. \\
\item[(iv)] $
[F_n,F_{n+1},F_{n+2},F_{n+3},F_{n+4}]=\begin{cases}
\frac{F_nF_{n+1}F_{n+2}F_{n+3}F_{n+4}}{F_{(n,4)}}, & \text{if $n\equiv1\pmod {3}$};\\
\frac{F_nF_{n+1}F_{n+2}F_{n+3}F_{n+4}}{2F_{(n,4)}}, & \text{if $n\equiv0,2\pmod {3}$}.\\
\end{cases}$
\item[(v)] $
[F_n,F_{n+1},F_{n+2},\ldots,F_{n+5}]=\begin{cases}
\frac{F_nF_{n+1}F_{n+2}F_{n+3}F_{n+4}F_{n+5}}{2F_{(n,5)}}, & \text{if $n\equiv1,2\pmod {4}$};\\
\frac{F_nF_{n+1}F_{n+2}F_{n+3}F_{n+4}F_{n+5}}{6F_{(n,5)}}, & \text{if $n\equiv0,3\pmod {4}$}.\\
\end{cases}$
\item[(vi)] 
$[F_n,F_{n+1},F_{n+2},\ldots,F_{n+6}]
= \begin{cases}
\frac{F_nF_{n+1}F_{n+2}F_{n+3}F_{n+4}F_{n+5}F_{n+6}}{2F_{(n(n+1),5)}F_{(n,6)}}, & \text{if $n\equiv1\pmod {4}$};\\
\frac{F_nF_{n+1}F_{n+2}F_{n+3}F_{n+4}F_{n+5}F_{n+6}}{6F_{(n(n+1),5)}F_{(n,6)}}, & \text{if $n\equiv0,2,3\pmod {4}$}.
\end{cases}
$
\end{itemize}
\end{lemma}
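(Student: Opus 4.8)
The strategy is to reduce the computation of $[F_n,\dots,F_{n+k}]$ to the combinatorial lcm formula of Lemma~\ref{lcmN} together with the gcd identity \eqref{eq:gcdF} and the $p$-adic valuations of Lemma~\ref{p-adic}. The key structural fact is that for a strongly divisible sequence like the Fibonacci numbers one has, for any finite index set, a clean relation between the lcm of the $F$-values and the $F$-value indexed by the lcm of the indices. Concretely, I expect to use that $[F_{a_1},\dots,F_{a_r}]$ divides $F_{[a_1,\dots,a_r]}$ and that the ``defect'' is controlled by the gcds $F_{(a_i,a_j)}=F_{(a_i,a_j)}$; for $k+1$ consecutive indices $n,n+1,\dots,n+k$ these gcds are $F_{(i,j)}$ for small differences $|i-j|\le k$, hence involve only $F_1=F_2=1$, $F_3=2$, $F_4=3$, $F_5=5$, $F_6=8$. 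So the only primes that can appear in the denominator are $2,3,5$ (and $2$-powers), exactly the primes showing up in $g_k(n)$.

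\medskip
\noindent\textbf{Main steps.} First I would establish the ``master identity''
\[
[F_n,F_{n+1},\dots,F_{n+k}] \;=\; \frac{F_nF_{n+1}\cdots F_{n+k}}{D_k(n)}
\]
and compute the denominator $D_k(n)$ prime by prime. For an odd prime $p\notin\{5\}$ with $z(p)=d\le k$, Lemma~\ref{p-adic} gives $v_p(F_m)=v_p(m)+v_p(F_d)$ when $d\mid m$ and $0$ otherwise; summing over the (at most $\lfloor k/d\rfloor+1$) multiples of $d$ among $n,\dots,n+k$ and comparing with $v_p$ of the lcm — which by the same token equals $v_p(F_d)+v_p\big([\,\text{those multiples}\,]\big)$ — shows that the $p$-part of $D_k(n)$ matches the $p$-part of a product of $F_{(i,j)}$'s, and is governed by the same divisibility condition on $n\bmod d$ that appears in $g_k$. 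The prime $5$ is handled directly from $v_5(F_m)=v_5(m)$, and the prime $2$ from the three-case formula for $v_2(F_m)$, which is responsible for the $2$ versus $6$ (and the $(4,\cdot)$ and $(8,\cdot)$) dichotomies. Second, with $D_k(n)$ known as a product of prime powers, I would recognize it as $F_{(n,3)}$, $F_{(n,4)}$, $F_{(n(n+1),5)}F_{(n,6)}$, etc., multiplied by the stray powers of $2$; matching these against the case splits $n\bmod 3$, $n\bmod 4$ exactly reproduces the statement of the lemma. Third, I would induct on $k$ using Lemma~\ref{Farhi}'s recursion $g_k(n)=(k!,(n+k)g_{k-1}(n))$ as a bookkeeping device, so that each new case differs from the previous only by the contribution of the single new term $F_{n+k}$, mirroring the proof of Lemma~\ref{lcmN}.

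\medskip
\noindent\textbf{Where the difficulty lies.} The conceptual content is light; the real obstacle is organizing the $2$-adic analysis. Because $v_2(F_m)$ depends on $m\bmod 6$ and, when $6\mid m$, on $v_2(m)$ itself, the $2$-part of $D_k(n)$ is not simply a function of $n$ modulo a fixed small number — it is, but only after one checks that among any $k+1\le 7$ consecutive integers the $v_2$ contributions telescope so that $v_2(D_k(n))$ is bounded and periodic in $n$. This is exactly the phenomenon encoded in the awkward factor $\big(4,(n+2)(2,\tfrac{n(n+1)}{2})\big)$ of Lemma~\ref{lcmN}, and transporting it through the $F$-indexing is the step requiring care: one must verify, case by case on $n\bmod 12$ (the lcm of $3$, $4$, and the relevant powers of $2$), that the claimed right-hand side has the correct $2$-adic and $3$-adic valuation. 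For $k=5,6$ this is a finite but genuine check; I would present it as a short table rather than prose. Everything for the odd primes $3$ and $5$ is routine given Lemma~\ref{p-adic}, and the strong-divisibility identity \eqref{eq:gcdF} does all the heavy lifting for the ``upper bound'' direction $[F_n,\dots,F_{n+k}]\mid \prod F_{n+i}/F_{(\cdot,\cdot)}$.
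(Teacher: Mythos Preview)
Your plan is sound and would succeed, but it is organized differently from the paper's argument. The paper does not compute the denominator $D_k(n)$ prime by prime via Lemma~\ref{p-adic}; instead it works almost entirely through the recursion $[F_n,\ldots,F_{n+k}]=\bigl[[F_n,\ldots,F_{n+k-1}],F_{n+k}\bigr]$ and the gcd identity~\eqref{eq:gcdF}. Each inductive step is reduced to evaluating a single gcd of the form $(P_{k-1},\,cF_{n+k})$ with $c$ the previously obtained constant, and this gcd is broken apart using coprimality: since $(F_{n+i},F_{n+k})=F_{(n+i,k-i)}$, most factors peel off as explicit small Fibonacci numbers, and the residual entanglements (for instance $(F_{n+3},2F_{n+6})$ when $3\mid n$) are handled by short ad~hoc factorizations rather than a valuation table. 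Lemma~\ref{p-adic} is invoked only at a couple of isolated points in part~(vi). Your valuation-first approach is more uniform --- once $v_2,v_3,v_5$ of each $F_{n+i}$ are tabulated as functions of $n\bmod 12$, the answer is mechanical --- while the paper's gcd-chasing needs fewer preliminaries but more case-specific manipulation. One caution: Lemma~\ref{Farhi}'s recursion $g_k(n)=(k!,(n+k)g_{k-1}(n))$ is specific to the \emph{integer} lcm and has no direct analogue for $D_k(n)$; your ``third step'' should therefore be understood simply as the one-term recursion above, which is precisely what the paper uses.
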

\begin{proof}
By (\ref{eq:gcdF}), it is easy to check that $F_n,F_{n+1},F_{n+2}$ are pairwise relatively prime. So (i) and (ii) follow immediately. Since (iii), (iv), (v), and (vi) follow from the same idea, we will only show the proof for (iii), (v), and (vi). 

Recall that $[a_1,a_2,\ldots,a_k]=\left[[a_1,a_2,\ldots,a_{k-1}],a_k\right]$ and $[a,b]=\frac{ab}{(a,b)}$. For convenience, we let $P_k=F_nF_{n+1}\cdots F_{n+k}$. Then (iii) follows from (ii) by 
\begin{align*} 
[F_n,F_{n+1},F_{n+2},F_{n+3}]&=\left[[F_n,F_{n+1},F_{n+2}],F_{n+3}\right]\\
&=\frac{[F_n,F_{n+1},F_{n+2}]F_{n+3}}{\left([F_n,F_{n+1},F_{n+2}],F_{n+3}\right)}=\frac{F_nF_{n+1}F_{n+2}F_{n+3}}{\left(F_nF_{n+1}F_{n+2},F_{n+3}\right)}\\
&=\frac{P_3}{(F_n,F_{n+3})(F_{n+1},F_{n+3})(F_{n+2},F_{n+3})}\\
&=\frac{P_3}{F_{(n,n+3)}}=\frac{P_3}{F_{(n,3)}}.
\end{align*}
Assuming (iv), we can obtain (v) in the following similar way. Since $F_{n+3}$, $F_{n+4}$, $F_{n+5}$ are pairwise relatively prime, we see that
\begin{align}
(P_4,F_{n+5})&=(F_n,F_{n+5})(F_{n+1},F_{n+5})(F_{n+2},F_{n+5})  \notag \\
&=F_{(n,n+5)}F_{(n+1,n+5)}F_{(n+2,n+5)} \notag \\
&=F_{(n,5)}F_{(n+1,4)}F_{(n+2,3)}. \label{eq:gcd5_1}
\end{align}
\textbf{Case 1: $n\equiv 1 \pmod 3$}. Then
\begin{align*}  
[F_n,F_{n+1},F_{n+2},F_{n+3},F_{n+4},F_{n+5}]&=[[F_n,F_{n+1},F_{n+2},F_{n+3},F_{n+4}],F_{n+5}]\\
&=\left[\frac{F_nF_{n+1}F_{n+2}F_{n+3}F_{n+4}}{F_{(n,4)}},F_{n+5}\right]\\
&=\frac{F_nF_{n+1}F_{n+2}F_{n+3}F_{n+4}F_{n+5}}{F_{(n,4)}\left(\frac{F_nF_{n+1}F_{n+2}F_{n+3}F_{n+4}}{F_{(n,4)}},F_{n+5}\right)}\\
&=\frac{F_nF_{n+1}F_{n+2}F_{n+3}F_{n+4}F_{n+5}}{(F_nF_{n+1}F_{n+2}F_{n+3}F_{n+4},F_{(n,4)}F_{n+5})}\\
&=\frac{P_5}{(P_4,F_{(n,4)}F_{n+5})}.
\end{align*}
Since $\left(F_{(n,4)},F_{n+5}\right)=F_{((n,4),n+5)}=F_{(n,(4,n+5))}=F_{(n,4,n+1)}=1$ and $n\equiv 1 \pmod 3 $, we obtain by (\ref{eq:gcd5_1}) that 
\begin{equation} \label{eq:gcd5_2}
(P_4,F_{(n,4)}F_{n+5})=2(P_4,F_{(n,4)})F_{(n,5)}F_{(n+1,4)}.
\end{equation}
It is easy to check that if $n\equiv 1,2 \pmod 4$, then the right hand side of (\ref{eq:gcd5_2}) is equal to $2F_{(n,5)}$, and if $n\equiv 0,3 \pmod 4$, then it is equal to $6F_{(n,5)}$. \\
\textbf{Case 2: $n\equiv 0,2 \pmod {3}$}. Similar to Case 1, we have 
\begin{align*}
[F_n,F_{n+1},F_{n+2},F_{n+3},F_{n+4},F_{n+5}]&=\frac{P_5}{(P_4,2F_{(n,4)}F_{n+5})}.
\end{align*}
It is easy to check using (\ref{eq:gcdF}) that $2=F_3$ is relatively prime to $F_{(n,4)}$ and $F_{n+5}$, and that $(F_{(n,4)},F_{n+5})=F_{((n,4),n+5)}=1$. This and (\ref{eq:gcd5_1}) implies that 
$$
(P_4,2F_{(n,4)}F_{n+5}) = 2(P_4,F_{(n,4)})F_{(n,5)}F_{(n+1,4)},
$$
 which is the same as (\ref{eq:gcd5_2}). So if $n\equiv 1,2 \pmod 4$, then it is equal to $2F_{(n,5)}$, and if $n\equiv 0,3 \pmod 4$, then it is equal to $6F_{(n,5)}$. This proves (v). Next we give a proof of (vi).\\
\textbf{Case 1: $n\equiv 1,2 \pmod {4}$}. Similar to the proof of (v), we have
$$
[F_n,F_{n+1},F_{n+2},F_{n+3},F_{n+4},F_{n+5},F_{n+6}]=\frac{P_6}{(P_5,2F_{(n,5)}F_{n+6})}.
$$
It is easy to see that $F_{(n,5)}$ is relatively prime to $2$. This implies that $(F_{(n,5)},2F_{n+6})=(F_{(n,5)},F_{n+6})=F_{\left((n,5),n+6\right)}=1$. So
$$
(P_5,2F_{(n,5)}F_{n+6})=(P_5,F_{(n,5)})(P_5,2F_{n+6})=(F_nF_{n+5},F_{(n,5)})(P_5,2F_{n+6}).
$$
We see that if $5\mid n$, then $(F_nF_{n+5},F_{(n,5)})=5$, and if $5\nmid n$, then $(F_nF_{n+5},F_{(n,5)})=1$. This implies that $(F_nF_{n+5},F_{(n,5)})=F_{(n,5)}$. Thus the above equation becomes
\begin{equation} \label{eq:An}
(P_5,2F_{(n,5)}F_{n+6})=F_{(n,5)}(P_5,2F_{n+6}).
\end{equation}
Consider $(2,F_{n+6})=(F_3,F_{n+6})=F_{(3,n+6)}=F_{(3,n)}$.\\
\textbf{Case 1.1: $3\nmid n$}. Then $(2,F_{n+6})=1$, and $F_{n+6}$ is relatively prime to $F_{n+5}, F_{n+4}$, and $F_{n+3}$. So (\ref{eq:An}) becomes
\begin{align} 
(P_5,2F_{(n,5)}F_{n+6})&=2F_{(n,5)}(P_5,F_{n+6}) \notag \\
&=2F_{(n,5)}(F_nF_{n+1}F_{n+2},F_{n+6}) \notag \\
&=2F_{(n,5)}(F_n,F_{n+6})(F_{n+1},F_{n+6})(F_{n+2},F_{n+6}) \notag \\
&=2F_{(n,5)}F_{(n,6)}F_{(n+1,5)}F_{(n+2,4)} \notag \\
&=2F_{(n(n+1),5)}F_{(n,6)}F_{(n+2,4)}. \label{eq:10} 
\end{align} 
\textbf{Case 1.2: $3\mid n$}. Then $2$ and $F_{n+6}$ are relatively prime to $F_{n+4}$ and $F_{n+5}$. 
In addition, $(F_nF_{n+1}F_{n+2},F_{n+3})=(F_{n},F_{n+3})=F_{(n,3)}=2$. So $\left(\frac{F_nF_{n+1}F_{n+2}}{2},\frac{F_{n+3}}{2}\right)=1$. Therefore
\begin{align*}
(P_5,2F_{n+6})&=(F_nF_{n+1}F_{n+2}F_{n+3},2F_{n+6})\\
&=4\left(\frac{F_nF_{n+1}F_{n+2}F_{n+3}}{4},\frac{F_{n+6}}{2}\right)=4\left(\frac{F_nF_{n+1}F_{n+2}}{2}\frac{F_{n+3}}{2},\frac{F_{n+6}}{2}\right)\\
&=4\left(\frac{F_nF_{n+1}F_{n+2}}{2},\frac{F_{n+6}}{2}\right)\left(\frac{F_{n+3}}{2},\frac{F_{n+6}}{2}\right)\\
&=(F_nF_{n+1}F_{n+2},F_{n+6})(F_{n+3},F_{n+6})\\
&=(F_n,F_{n+6})(F_{n+1},F_{n+6})(F_{n+2},F_{n+6})(F_{n+3},F_{n+6})\\
&=F_{(n,6)}F_{(n+1,5)}F_{(n+2,4)}F_{(n+3,3)}=2F_{(n,6)}F_{(n+1,5)}F_{(n+2,4)}.
\end{align*}
Thus (\ref{eq:An}) becomes
$$
(P_5,2F_{(n,5)}F_{n+6})=2F_{(n,5)}F_{(n,6)}F_{(n+1,5)}F_{(n+2,4)}=2F_{(n(n+1),5)}F_{(n+2,4)}F_{(n,6)},
$$
which is the same as (\ref{eq:10}). We conclude that Case 1.1 and 1.2 lead to the same formula for $[F_n,F_{n+1},F_{n+2},F_{n+3},F_{n+4},F_{n+5},F_{n+6}]$. Observe that if $n\equiv 1\pmod 4$, then $F_{(n+2,4)} = 1$, and if $n\equiv 2\pmod 4$, then $F_{(n+2,4)} = 3$. This leads to the desired formula in (vi).\\
\textbf{Case 2: $n\equiv 0,3 \pmod {4}$}. Similar to the proof of $(v)$, we have 
$$
[F_n,F_{n+1},F_{n+2},F_{n+3},F_{n+4},F_{n+5},F_{n+6}]=\frac{P_6}{(P_5,6F_{(n,5)}F_{n+6})}.
$$
It is easy to see that $F_{(n,5)}$ is relatively prime to $2$ and $3$. So $(F_{(n,5)},6F_{n+6})=(F_{(n,5)},F_{n+6})=F_{((n,5),n+6)}=1$. Thus 
\begin{equation} \label{eq:Bn}
(P_5,6F_{(n,5)}F_{n+6})=(P_5,F_{(n,5)})(P_5,6F_{n+6})=F_{(n,5)}(P_5,6F_{n+6}).
\end{equation}
\textbf{Case 2.1: $3\nmid n$}. Then $(6,F_{n+6})=1$ and $(F_{n+3}F_{n+4}F_{n+5},F_{n+6})=1$. So $(P_5,6F_{n+6})=6(P_5,F_{n+6})=6(F_nF_{n+1}F_{n+2},F_{n+6})=6(F_n,F_{n+6})(F_{n+1},F_{n+6})(F_{n+2},F_{n+6})=6F_{(n,6)}F_{(n+1,5)}$. So we obtain by (\ref{eq:Bn}) that 
\begin{equation} \label{eq:12}
(P_5,6F_{(n,5)}F_{n+6})=6F_{(n,5)}F_{(n,6)}F_{(n+1,5)}=6F_{(n,6)}F_{(n(n+1),5)}.
\end{equation}
\textbf{Case 2.2: $3\mid n$}. Then $(F_{n+5},6F_{n+6})=(F_{n+5},6)=(F_4,F_{n+5})(F_3,F_{n+5})=F_{(4,n+1)}$. We obtain similarly that $(F_{n+4},6F_{n+6})=F_{(4,n)}$ and $(F_{n+3},6F_{n+6})$ $=(F_{n+3},3)(F_{n+3},2F_{n+6})$ $=(F_{n+3},2F_{n+6}) = (F_{n+3},4)$, where the last equality is obtained from the fact that $(F_{n+3},F_{n+6})=2$. So
\begin{equation} \label{eq:Cn}
(F_{n+3}F_{n+4}F_{n+5},6F_{n+6})=F_{(4,n+1)}F_{(4,n)}(F_{n+3},4).
\end{equation}
From this we obtain by Lemma \ref{p-adic} that
\[
(F_{n+3}F_{n+4}F_{n+5},6F_{n+6})=
\begin{cases}
6, & \text{if $n\equiv 0 \pmod {12}$};\\
12, & \text{if $n\equiv 3 \pmod {12}$}.
\end{cases}
\] 
\textbf{Case 2.2.1: $n\equiv 0 \pmod {12}$}. Then $\left(\frac{F_{n+3}F_{n+4}F_{n+5}}{6},F_{n+6}\right)=1$. So
\begin{align*}
(P_5,6F_{n+6})&=6\left(F_nF_{n+1}F_{n+2}\frac{F_{n+3}F_{n+4}F_{n+5}}{6},F_{n+6}\right)\\
&=6(F_nF_{n+1}F_{n+2},F_{n+6})\\
&=6(F_n,F_{n+6})(F_{n+1},F_{n+6})(F_{n+2},F_{n+6})\\
&=6F_{(n,6)}F_{(n+1,5)}.
\end{align*}
Thus we obtain by (\ref{eq:Bn}) that 
\begin{equation} \label{eq:13}
(P_5,6F_{(n,5)}F_{n+6})=6F_{(n,6)}F_{(n+1,5)}F_{(n,5)}=6F_{(n,6)}F_{(n(n+1),5)},
\end{equation}
which is the same as (\ref{eq:12}).\\
\textbf{Case 2.2.2: $n\equiv 3 \pmod {12}$}. Then $\left(\frac{F_{n+3}F_{n+4}F_{n+5}}{12},\frac{F_{n+6}}{2}\right)=1$. So
\begin{align*}
(P_5,6F_{n+6})&=12\left(F_nF_{n+1}F_{n+2}\frac{F_{n+3}F_{n+4}F_{n+5}}{12},\frac{F_{n+6}}{2}\right)\\
&=12\left(F_nF_{n+1}F_{n+2},\frac{F_{n+6}}{2}\right)\\
&=12\left(F_n,\frac{F_{n+6}}{2}\right)\left(F_{n+1},\frac{F_{n+6}}{2}\right)\left(F_{n+2},\frac{F_{n+6}}{2}\right).
\end{align*}
Consider $(F_{n+2},F_{n+6})=F_{(n+2,4)}=1$, $(F_{n+1},F_{n+6})=F_{(n+1,5)}$, $(F_{n},F_{n+6})=F_{(n,6)}=F_{(3,6)}=2$, and $v_2(F_n)=v_2(F_{n+6})=1$. Therefore $(P_5,6F_{n+6})=12F_{(n+1,5)}$, and thus $(P_5,6F_{(n,5)}F_{n+6})=12F_{(n,5)}F_{(n+1,5)}=6F_{(n,6)}F_{(n(n+1),5)}$, which is the same as (\ref{eq:13}) and (\ref{eq:12}). So Case 2.1 and 2.2 lead to the same formula for $[F_n,F_{n+1},F_{n+2},F_{n+3}F_{n+4},F_{n+5}F_{n+6}]$. This completes the proof of (vi).
\end{proof}
\section{Main Results}\label{sec3}
As mentioned in the introduction, our method of proof gives a general idea on how to obtain $z(F_{n}F_{n+1}\cdots F_{n+k})$ for every $k\geq 1$. In fact, the next theorem describes the general strategy in obtaining the formula for $z(F_{n}F_{n+1}\cdots F_{n+k})$. 
\begin{theorem} \label{idea}
Let $n\geq 3$, $k\geq 1$, $a=[n,n+1,\ldots,n+k]$, $b = F_nF_{n+1}\cdots F_{n+k}$ and $f_k(n)=\frac{F_nF_{n+1}F_{n+2}\cdots F_{n+k}}{[F_n,F_{n+1},F_{n+2},\ldots ,F_{n+k}]}$. Then the following holds.
\begin{itemize}
\item[(i)] $b\mid f_k(n)F_{aj}$ for every $j\geq 1$.
\item[(ii)] $z(b)=aj$ where $j$ is the smallest positive integer such that $b\mid F_{aj}$. In fact, $j$ is the smallest positive integer such that $v_p(b) \leq v_p(F_{aj})$ for every prime $p$ dividing $f_k(n)$.
\end{itemize}
\end{theorem}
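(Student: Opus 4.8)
The plan is to prove (i) first and then deduce (ii) from it together with Lemma~\ref{p-adic}. For (i), the key observation is that $[F_n,F_{n+1},\ldots,F_{n+k}]$ divides $F_a$: indeed, each $F_{n+i}$ divides $F_{(n+i)j'}$ for suitable multiples, and more directly, since $n+i \mid a$ for every $i=0,1,\ldots,k$ (as $a=[n,n+1,\ldots,n+k]$), property~(\ref{eq:1}) gives $F_{n+i}\mid F_a$ for $n+i\geq 3$, hence $[F_n,F_{n+1},\ldots,F_{n+k}]\mid F_a$. (The finitely many cases with $n+i\leq 2$ are handled trivially since $F_1=F_2=1$; in fact $n\geq 3$ already forces all indices to be at least $3$.) Combining this with the defining relation $b = f_k(n)\cdot[F_n,\ldots,F_{n+k}]$, we get $b \mid f_k(n)F_a$. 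To upgrade from $F_a$ to $F_{aj}$, note that $a\mid aj$, so $F_a\mid F_{aj}$ again by~(\ref{eq:1}), and therefore $b\mid f_k(n)F_a\mid f_k(n)F_{aj}$ for every $j\geq 1$, which is exactly (i).

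For (ii), first I would show that $z(b)$ is a multiple of $a$. Since $z(b)$ is by definition the least $k'$ with $b\mid F_{k'}$, and $F_{n+i}\mid b\mid F_{z(b)}$, property~(\ref{eq:1}) forces $n+i\mid z(b)$ for each $i$, hence $a=[n,n+1,\ldots,n+k]\mid z(b)$. Writing $z(b)=aj_0$, minimality of $z(b)$ means $j_0$ is the least positive integer with $b\mid F_{aj_0}$; this is the first assertion of (ii). For the refinement, I would use that $b\mid F_{aj}$ if and only if $v_p(b)\leq v_p(F_{aj})$ for every prime $p$. Split the primes into those dividing $f_k(n)$ and those not. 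If $p\nmid f_k(n)$, then $p^{v_p(b)}$ divides $[F_n,F_{n+1},\ldots,F_{n+k}]$, which divides $F_a$, which divides $F_{aj}$; so the divisibility condition at such $p$ holds automatically for every $j\geq 1$. Hence $b\mid F_{aj}$ reduces to checking $v_p(b)\leq v_p(F_{aj})$ only for the primes $p\mid f_k(n)$, and the least such $j$ is $j_0=z(b)/a$.

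The main obstacle, or at least the point requiring care, is the claim that $v_p(b)\leq v_p(F_a)$ for every prime $p\nmid f_k(n)$. This is where the identity $b = f_k(n)[F_n,\ldots,F_{n+k}]$ and the divisibility $[F_n,\ldots,F_{n+k}]\mid F_a$ do the real work: for $p\nmid f_k(n)$ one has $v_p(b)=v_p([F_n,\ldots,F_{n+k}])\leq v_p(F_a)\leq v_p(F_{aj})$, so no condition on $j$ is imposed. One should also double-check the edge constraint $n\geq 3$ is genuinely needed so that~(\ref{eq:1}) applies to all indices $n,n+1,\ldots,n+k$; since $k\geq 1$ the smallest index is $n\geq 3$, so this is fine. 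The remaining content of (ii) — that one really only needs to test the finitely many primes dividing $f_k(n)$, and that for each the inequality $v_p(b)\leq v_p(F_{aj})$ can be made explicit via Lemma~\ref{p-adic} — is then just bookkeeping, and it is precisely this bookkeeping that the subsequent sections carry out for $k\leq 6$.
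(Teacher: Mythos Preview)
Your proposal is correct and follows essentially the same approach as the paper: for (i) you use $n+i\mid a$ and~(\ref{eq:1}) to get $[F_n,\ldots,F_{n+k}]\mid F_a\mid F_{aj}$, then multiply through by $f_k(n)$; for (ii) you use~(\ref{eq:1}) in the reverse direction to force $a\mid z(b)$, and then eliminate the primes $p\nmid f_k(n)$ from the divisibility check via the identity $v_p(b)=v_p([F_n,\ldots,F_{n+k}])\leq v_p(F_{aj})$. The paper phrases this last step slightly differently---it invokes (i) directly to write $v_p(b)\leq v_p(f_k(n)F_{aj})=v_p(F_{aj})$ for $p\nmid f_k(n)$---but the content is identical.
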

\begin{proof}
Since $n+i\mid a$ for all $0\leq i\leq k$, we obtain by (\ref{eq:1}) that $F_{n+i}\mid F_a$ for all $0\leq i\leq k$. So $[F_n,F_{n+1},\ldots,F_{n+k}]\mid F_a$. By the definition of $f_k(n)$, we see that $b\mid f_k(n)F_a$. Since $F_a\mid F_{aj}$,
$$
b\mid f_k(n)F_{aj} \ \ \text{for every $j\geq 1$}. 
$$
This proves (i). Next let $z(b)=\ell$. Then $b\mid F_\ell$. Therefore $F_{n+i}\mid F_\ell$ for all $0\leq i\leq k$. Since $n\geq3$, we obtain by (\ref{eq:1}) that $n+i\mid \ell$ for all $0\leq i\leq k$, which implies that $a\mid \ell$. Thus $\ell=aj$ for some $j\in \N$. By the definition of $z(b)$, we see that $j$ is the smallest positive integer such that
\begin{equation}\label{eq:equa1} 
b\mid F_{aj}.
\end{equation} 
Note that (\ref{eq:equa1}) is equivalent to $v_{p}(b)\leq v_{p}(F_{aj})$ for every prime $p$. But by (i), if $p$ is a prime and $p\nmid f_k(n)$, then 
$$
v_{p}(b)\leq v_{p}(f_k(n)F_{aj})=v_{p}(F_{aj}). 
$$
Therefore (\ref{eq:equa1}) is equivalent to 
\begin{equation} \label{eq:equa4}
v_{p}(b)\leq v_{p}(F_{aj}) \ \text{for every prime $p$ dividing $f_k(n)$}. 
\end{equation}
Hence $z(b) = \ell = aj$ and $j$ is the smallest positive integer satisfying (\ref{eq:equa4}). This proves (ii).
\end{proof}
\begin{thm}\label{order4}
Let $n\geq1$, $a=[n,n+1,n+2,n+3,n+4]$, and $b = F_{n}F_{n+1}F_{n+2}F_{n+3}F_{n+4}$. Then
\[
z(b) =
\begin{cases}
a, & \text{if $n\equiv1,2,3,4,5,6,7,10 \pmod {12}$, or $n\equiv8,60 \pmod {72}$};\\
2a, & \text{if $n\equiv9,11\pmod {12}$, or $n\equiv24,44 \pmod {72}$};\\
3a, & \text{if $n\equiv12,32,36,56 \pmod {72}$}; \\
6a, & \text{if $n\equiv0,20,48,68 \pmod {72}$}.
\end{cases}
\] 
\end{thm}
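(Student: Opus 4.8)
The plan is to apply Theorem~\ref{idea} with $k=4$, which reduces the problem to computing the smallest $j\geq 1$ such that $v_p(b)\leq v_p(F_{aj})$ for every prime $p$ dividing $f_4(n)$. First I would determine $f_4(n)$ explicitly. By definition $f_4(n) = b/[F_n,\ldots,F_{n+4}]$, and Lemma~\ref{lcmF}(iv) gives $[F_n,\ldots,F_{n+4}] = \frac{b}{F_{(n,4)}}$ when $n\equiv 1\pmod 3$ and $= \frac{b}{2F_{(n,4)}}$ when $n\equiv 0,2\pmod 3$; hence $f_4(n) = F_{(n,4)}$ or $2F_{(n,4)}$ accordingly. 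Since $(n,4)\in\{1,2,4\}$ we have $F_{(n,4)}\in\{1,1,3\}$, so the only primes that can divide $f_4(n)$ are $2$ and $3$. Thus the whole problem comes down to comparing $2$-adic and $3$-adic valuations: find the least $j$ with $v_2(b)\leq v_2(F_{aj})$ and $v_3(b)\leq v_3(F_{aj})$.

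Next I would compute $v_2(b)$ and $v_3(b)$ as functions of $n$ using Lemma~\ref{p-adic}. Among five consecutive indices $n,\ldots,n+4$, exactly one or two are divisible by $3$ (contributing to $v_2$ via the $n\equiv 0,3\pmod 6$ cases) and exactly one or two are divisible by $4$ (contributing to $v_3$); so $v_2(b)$ and $v_3(b)$ depend only on $n\bmod\,(\text{small modulus})$ — in practice $n\bmod 12$ refined by $v_2(n)$ or $v_2(n+3)$ near multiples of $6$, and $n\bmod 4$ refined by $v_3$ near multiples of $4$. Simultaneously I would compute $a=[n,\ldots,n+4]$ from Lemma~\ref{lcmN}, namely $a=\frac{n(n+1)(n+2)(n+3)(n+4)}{2(4,n)(3,n(n+1))}$, and then read off $v_2(F_a)$ and $v_3(F_a)$ via Lemma~\ref{p-adic}. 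The key mechanism is that multiplying $a$ by $j$ can raise $v_2(F_{aj})$ (when $j$ supplies a needed factor of $2$ or pushes $aj$ into the $\equiv 0\pmod 6$ regime) and raise $v_3(F_{aj})$ (when $j$ supplies a factor of $3$ or pushes $aj$ into $\equiv 0\pmod 4$); the minimal such $j$ is then determined congruence class by congruence class.

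So the proof is a finite case analysis: partition $n$ according to the residues that control $v_2(b)$, $v_3(b)$, $v_2(F_a)$, $v_3(F_a)$ — which turns out to require $n\bmod 12$ together with $n\bmod 72$ in the delicate cases (the modulus $72 = 8\cdot 9$ reflecting that $v_2$ jumps can need two extra factors and $v_3$ jumps one, interacting through $a$). In each class one checks whether $j=1$ already works, and if not whether $j=2$ (fixes a missing $2$), $j=3$ (fixes a missing $3$), or $j=6$ (fixes both) is the least that works, invoking minimality by verifying that no smaller divisor of the candidate suffices. The main obstacle is bookkeeping: correctly tracking how $v_2(n)$, $v_2(n+3)$, $v_3(n)$, etc. feed into $v_2(a)$ and $v_3(a)$ and hence into $v_2(F_a)$, $v_3(F_a)$ — in particular handling the nonlinear boost $v_2(F_m)=v_2(m)+2$ for $6\mid m$, which is exactly what forces the appearance of the modulus $72$ and distinguishes, e.g., $n\equiv 8\pmod{72}$ (where $j=1$) from $n\equiv 24,44\pmod{72}$ (where $j=2$) and $n\equiv 12,32,36,56\pmod{72}$ (where $j=3$). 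Everything else is routine arithmetic with Lemmas~\ref{p-adic}, \ref{lcmN}, and \ref{lcmF}.
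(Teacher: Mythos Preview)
Your proposal is correct and follows essentially the same route as the paper: apply Theorem~\ref{idea} with $k=4$, use Lemma~\ref{lcmF}(iv) to identify $f_4(n)\in\{F_{(n,4)},\,2F_{(n,4)}\}$ so that only the primes $2$ and $3$ matter, and then run a case analysis on $n\bmod 12$ (refined to $n\bmod 72$ when $n\equiv 0\pmod 4$ and $n\not\equiv 1\pmod 3$) comparing $v_2,v_3$ of $b$ and $F_{aj}$ via Lemmas~\ref{p-adic} and~\ref{lcmN}. The only omission is the trivial check of $n=1,2$ (Theorem~\ref{idea} assumes $n\geq 3$); note also that since $6\mid a$ and $4\mid a$ always, the ``pushing $aj$ into the right regime'' remarks are vacuous here---$j$ acts purely by contributing extra factors of $2$ or $3$.
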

\begin{proof} 
It is easy to check that the result holds for $n=1, 2$. So assume that $n\geq 3$. \\
\textbf{Case 1: $n\equiv 1 \pmod {3}$}. Then by Lemma \ref{lcmF} and Theorem \ref{idea}, we have $b\mid F_{(n,4)}F_{aj}$ for every $j\geq 1$ and we would like to find the smallest $j$ such that $b\mid F_{aj}$. If $n\equiv 1,2,3 \pmod {4}$, then $F_{(n,4)}=1$, so we can choose $j=1$ and obtain $z(b)=a$. So assume that $n\equiv 0 \pmod {4}$. Then $F_{(n,4)}=3$ and by Theorem \ref{idea} we only need to consider $v_3(b)$ and $v_3(F_{aj})$. Since $n\equiv 1 \pmod {3}$ and $n\equiv 0 \pmod {4}$, we obtain by Lemma \ref{p-adic} that $v_3(b)=v_3(F_{n})+v_3(F_{n+4})=v_3(n)+v_3(n+4)+2=2$. Since $4\mid n$ and $n\mid aj$, $4\mid aj$. So we obtain by Lemmas \ref{p-adic} and \ref{lcmN} that for every $j\geq 1$,
\begin{align*}
v_3(F_{aj})&=v_3(a)+v_3(j)+1=v_3\left(\frac{n(n+1)(n+2)(n+3)(n+4)}{8}\right)+v_3(j)+1\\
&=v_3(n+2)+v_3(j)+1\geq 2+v_3(j)\geq 2=v_3(b).
\end{align*}
Thus we can choose $j=1$ and obtain $z(b)=a$. This shows $z(b) = a$ whenever $n\equiv 1\pmod 3$. We remark that the idea that will be used in the following case is still the same as that in the previous case. So our argument will be shorter.\\
\textbf{Case 2: $n\equiv 2 \pmod {3}$}. Then by Lemma \ref{lcmF} and Theorem \ref{idea}, we have $b\mid 2F_{(n,4)}F_{aj}$ for every $j\geq 1$ and our problem is reduced to finding the smallest positive integer $j$ such that $v_p(b)\leq v_p(F_{aj})$ for every prime $p$ dividing $2F_{(n,4)}$. Let $j\geq 1$. Since $3\mid n+1$ and $n+1\mid a$, we see that $3\mid aj$. Similarly $2\mid aj$. Therefore $6\mid aj$. By Lemma \ref{p-adic}, $v_2(F_{aj}) = v_2(aj)+2$. In addition, $v_2(b) = v_2(F_{n+1})+v_2(F_{n+4})$.\\
\textbf{Case 2.1: $n\equiv 1 \pmod {4}$}. Then by Lemmas \ref{p-adic} and \ref{lcmN}, we obtain 
\begin{align*}
v_2(F_{aj})&=v_2(a)+v_2(j)+2=v_2(n+1)+v_2(n+3)-v_2(2)+v_2(j)+2\\
&=v_2(n+3)+v_2(j)+2\geq 4 = v_2(n+1)+3 = v_2(F_{n+1})+v_2(F_{n+4})=v_2(b).
\end{align*}
So in this case, we can choose $j = 1$ and obtain $z(b)=a$.\\
\textbf{Case 2.2: $n\equiv 2 \pmod {4}$}. Similar to Case 2.1, we see that 
\begin{align*}
v_2(F_{aj})&=v_2(n)+v_2(n+2)+v_2(n+4)-v_2(4)+v_2(j)+2\\
&=v_2(n+2)+v_2(j)+2\geq 4=v_2(b), \ \text{and} \ z(b)=a.
\end{align*}
\textbf{Case 2.3: $n\equiv 3 \pmod {4}$}. Then $v_2(b)=v_2(n+1)+3$, and $v_2(F_{aj})=v_2(n+1)+v_2(j)+2$. So $v_2(F_{aj})\geq v_2(b)$ if and only if $v_2(j)\geq 1$. So we choose $j=2$ and obtain $z(b)=2a$.\\
\textbf{Case 2.4: $n\equiv 0 \pmod {4}$}. 
Then $2F_{(n,4)}=6$ and we need to consider 2-adic and 3-adic orders of $b$ and $F_{aj}$. By Lemmas \ref{p-adic} and \ref{lcmN}, we obtain similarly to the other cases that
\begin{align*}
v_2(b) &= v_2(n+4)+3, \quad\quad v_2(F_{aj})=v_2(n)+v_2(n+4)+v_2(j),\\
v_3(b) &= v_3(F_n)+v_3(F_{n+4})=v_3(n)+v_3(n+4)+2=v_3(n+4)+2, \,\text{and}\\
v_3(F_{aj}) &= v_3(aj)+1 = v_3(n+1)+v_3(n+4)+v_3(j).
\end{align*}
So we need to find the smallest $j\geq 1$ such that
$$
v_2(n)+v_2(j)\geq 3 \ \text{and} \ v_3(n+1)+v_3(j)\geq 2.
$$
Note that $n \equiv 0,4 \pmod 8$ and $n+1 \equiv 0,3,6 \pmod 9$.
\begin{itemize}
\item[(i)] If $n \equiv 0 \pmod 8$ and $n+1 \equiv 0 \pmod 9$, then $v_2(j)=v_3(j)=0$, so $j=1$ and $z(b)=a=\frac{72a}{(8,n)(9,n+1)}$.
\item[(ii)] If $n \equiv 0 \pmod 8$ and $n+1 \equiv 3,6 \pmod 9$, then $v_2(j)=0$ and $v_3(j)=1$, so $j=3$ and $z(b)=3a=\frac{72a}{(8,n)(9,n+1)}$.
\item[(iii)] If $n \equiv 4 \pmod 8$ and $n+1 \equiv 0 \pmod 9$, then $v_2(j)=1$ and $v_3(j)=0$, so $j=2$ and $z(b)=2a=\frac{72a}{(8,n)(9,n+1)}$.
\item[(iv)] If $n \equiv 4 \pmod 8$ and $n+1 \equiv 3,6 \pmod 9$, then $v_2(j)=v_3(j)=1$, so $j=6$ and $z(b)=6a=\frac{72a}{(8,n)(9,n+1)}$.
\end{itemize}
\textbf{Case 3: $n\equiv 0 \pmod {3}$}. Similar to Case 2, $b\mid 2F_{(n,4)}F_{aj}$ for every $j\geq 1$ and we need to find the smallest $j$ such that $v_p(b)\leq v_p(F_{aj})$ for every prime $p$ dividing $2F_{(n,4)}$.\\
\textbf{Case 3.1: $n\equiv 1 \pmod {4}$}. Then $2F_{(n,4)}=2$, $v_2(b)=v_2(n+3)+3$, and $v_2(F_{aj})=v_2(n+3)+v_2(j)+2$. So we need $j=2$ and therefore $z(b)=2a$.\\
\textbf{Case 3.2: $n\equiv 2 \pmod {4}$}. Then $2F_{(n,4)}=2$, $v_2(b)=4$, and $v_2(F_{aj})=v_2(n+2)+v_2(j)+2\geq 4=v_2(b)$. So $j=1$ and $z(b)=a$.\\
\textbf{Case 3.3: $n\equiv 3 \pmod {4}$}. Then $2F_{(n,4)}=2$, $v_2(b)=4$, and $v_2(F_{aj})=v_2(n+1)+v_2(j)+2\geq 4=v_2(b)$. So $j=1$ and $z(b)=a$.\\
\textbf{Case 3.4: $n\equiv 0 \pmod {4}$}. Then $2F_{(n,4)}=6$. So we need to consider 2-adic and 3-adic orders of $b$ and $F_{aj}$. By Lemmas \ref{p-adic} and \ref{lcmN}, we obtain that
\begin{align*}
v_2(b) &= v_2(n)+3, \quad\quad v_2(F_{aj}) = v_2(n)+v_2(n+4)+v_2(j),\\
v_3(b) &= v_3(F_n)+v_3(F_{n+4}) = v_3(n)+v_3(n+4)+2 = v_3(n)+2,\;\text{ and}\\
v_3(F_{aj}) &= v_3(aj)+1 = v_3(n)+v_3(n+3)+v_3(j).
\end{align*} 
 So we need to find the smallest $j\geq 1$ such that
$$
v_2(n+4)+v_2(j)\geq 3 \ \text{and} \ v_3(n+3)+v_3(j)\geq 2.
$$
Note that $n+4 \equiv 0,4 \pmod 8$ and $n+3 \equiv 0,3,6 \pmod 9$.
\begin{itemize}
\item[(i)] If $n+4 \equiv 0 \pmod 8$ and $n+3 \equiv 0 \pmod 9$, then $v_2(j)=v_3(j)=0$, so $j=1$ and $z(b)=a=\frac{72a}{(8,n+4)(9,n+3)}$.
\item[(ii)] If $n+4 \equiv 0 \pmod 8$ and $n+3 \equiv 3,6 \pmod 9$, then $v_2(j)=0$ and $v_3(j)=1$, so $j=3$ and $z(b)=3a=\frac{72a}{(8,n+4)(9,n+3)}$.
\item[(iii)] If $n+4 \equiv 4 \pmod 8$ and $n+3 \equiv 0 \pmod 9$, then $v_2(j)=1$ and $v_3(j)=0$, so $j=2$ and $z(b)=2a=\frac{72a}{(8,n+4)(9,n+3)}$.
\item[(iv)] If $n+4 \equiv 4 \pmod 8$ and $n+3 \equiv 3,6 \pmod 9$, then $v_2(j)=v_3(j)=1$, so $j=6$ and $z(b)=6a=\frac{72a}{(8,n+4)(9,n+3)}$.
\end{itemize}
This completes the proof.
\end{proof}
We can state Theorem \ref{order4} in another form as follows.
\begin{corollary} 
Let $n\geq 1$, $a=[n,n+1,n+2,n+3,n+4]$, and $b=F_{n}F_{n+1}F_{n+2}F_{n+3}F_{n+4}$. Then
\[
z(b) =
\begin{cases}
a, & \text{if $n\equiv1 \pmod {3}$ or $n\equiv2,3,5,6 \pmod {12}$};\\
2a, & \text{if $n\equiv9,11\pmod {12}$};\\
\frac{72a}{(8,n)(9,n+1)}, & \text{if $n\equiv8 \pmod {12}$}; \\
\frac{72a}{(8,n+4)(9,n+3)}, & \text{if $n\equiv0 \pmod {12}$}.
\end{cases}
\] 
\end{corollary}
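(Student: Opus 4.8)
The plan is to obtain the corollary directly from Theorem~\ref{order4}, of which it is only a regrouping. First I would dispose of the residues for which $z(b)\in\{a,2a\}$: the condition ``$n\equiv 1\pmod 3$'' is the same as ``$n\equiv 1,4,7,10\pmod{12}$'', so ``$n\equiv 1\pmod 3$ or $n\equiv 2,3,5,6\pmod{12}$'' is exactly ``$n\equiv 1,2,3,4,5,6,7,10\pmod{12}$'', which is the mod-$12$ part of the first branch of Theorem~\ref{order4}; and ``$n\equiv 9,11\pmod{12}$'' is the mod-$12$ part of its second branch. Hence the first two lines of the corollary are immediate.

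It remains to treat the two residue classes $n\equiv 8\pmod{12}$ and $n\equiv 0\pmod{12}$, which are the only ones left and on which Theorem~\ref{order4} records $z(b)$ according to the residue of $n$ modulo $72$. The second step is therefore to run through the six subclasses modulo $72$ inside each of these two classes and check that the closed-form fraction in the corollary reproduces the multiple of $a$ given by Theorem~\ref{order4}. Concretely, for $n\equiv 8\pmod{12}$ the subclasses are $n\equiv 8,20,32,44,56,68\pmod{72}$, and a short computation of $(8,n)$ and $(9,n+1)$ shows that $\frac{72}{(8,n)(9,n+1)}$ equals $1,6,3,2,3,6$ respectively, matching the values $a,6a,3a,3a,2a,6a$ of Theorem~\ref{order4}; for $n\equiv 0\pmod{12}$ the subclasses $n\equiv 0,12,24,36,48,60\pmod{72}$ give $\frac{72}{(8,n+4)(9,n+3)}$ equal to $6,3,2,3,6,1$, again in agreement.

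An even shorter route is to observe that the classes $n\equiv 8\pmod{12}$ and $n\equiv 0\pmod{12}$ are exactly Cases~2.4 and~3.4 in the proof of Theorem~\ref{order4}, and those cases already establish $z(b)=\frac{72a}{(8,n)(9,n+1)}$ and $z(b)=\frac{72a}{(8,n+4)(9,n+3)}$ verbatim, so one may simply quote them. Either way the argument is pure bookkeeping; the only place that requires a little care is the dictionary between the two styles of residue conditions --- for instance checking that ``$n\equiv 0\pmod{12}$'' is precisely ``$n\equiv 0\pmod 3$ and $n\equiv 0\pmod 4$'' (Case~3.4) and that ``$n\equiv 8\pmod{12}$'' is ``$n\equiv 2\pmod 3$ and $n\equiv 0\pmod 4$'' (Case~2.4). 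I do not expect any genuine obstacle: once this dictionary is in place the corollary follows by a finite, mechanical check.
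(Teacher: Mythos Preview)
Your proposal is correct and is exactly the approach the paper takes: its one-line proof says the corollary ``can be obtained from the proof of Theorem~\ref{order4}, or by comparing the result with Theorem~\ref{order4}'', which is precisely your two routes (quoting Cases~2.4 and~3.4 directly, or else doing the mod-$72$ bookkeeping). One tiny slip: in your list of Theorem~\ref{order4} values for $n\equiv 8,20,32,44,56,68\pmod{72}$ you wrote $a,6a,3a,3a,2a,6a$, but the correct order is $a,6a,3a,2a,3a,6a$, matching your own computed fractions $1,6,3,2,3,6$.
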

\begin{proof}
This can be obtained from the proof of Theorem \ref{order4}, or by comparing the result with Theorem \ref{order4}.
\end{proof}
\begin{corollary}
Let $n\geq 1$ and $b=F_{n}F_{n+1}F_{n+2}F_{n+3}F_{n+4}$. Then
\[
z(b) =
\begin{cases}
\frac{n(n+1)(n+2)(n+3)(n+4)}{2}, & \text{if $n\equiv1,7 \pmod {12}$};\\
\frac{n(n+1)(n+2)(n+3)(n+4)}{3}, & \text{if $n\equiv9,11\pmod {12}$};\\
\frac{n(n+1)(n+2)(n+3)(n+4)}{4}, & \text{if $n\equiv10 \pmod {12}$ or $n\equiv0,20,48,68 \pmod {72}$}; \\
\frac{n(n+1)(n+2)(n+3)(n+4)}{6}, & \text{if $n\equiv3,5 \pmod {12}$};\\
\frac{n(n+1)(n+2)(n+3)(n+4)}{8}, & \text{if $n\equiv4\pmod {12}$ or $n\equiv12,32,36,56 \pmod {72}$};\\
\frac{n(n+1)(n+2)(n+3)(n+4)}{12}, & \text{if $n\equiv2,6 \pmod {12}$ or $n\equiv24,44 \pmod {72}$}; \\
\frac{n(n+1)(n+2)(n+3)(n+4)}{24}, & \text{if $n\equiv8,60 \pmod {72}$}.
\end{cases}
\] 
\end{corollary}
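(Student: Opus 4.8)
The plan is to derive this corollary purely by combining two facts already in hand: Theorem~\ref{order4}, which gives $z(b)=ja$ with $a=[n,n+1,n+2,n+3,n+4]$ and an explicit $j\in\{1,2,3,6\}$ depending only on the residue of $n$ modulo $12$ (refined to modulo $72$ in two classes), and the identity $a=\dfrac{n(n+1)(n+2)(n+3)(n+4)}{2(4,n)(3,n(n+1))}$ from Lemma~\ref{lcmN}. Substituting the latter into the former,
\[
z(b)=\frac{j\,n(n+1)(n+2)(n+3)(n+4)}{2(4,n)(3,n(n+1))},
\]
so the whole task reduces to computing the integer $2(4,n)(3,n(n+1))/j$ on each residue class appearing in Theorem~\ref{order4} and checking that it matches the denominator claimed in the corollary.

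First I would record, as a function of $n\bmod 12$, that $(4,n)$ equals $4,2,1$ according as $n\equiv 0\pmod 4$, $n\equiv 2\pmod 4$, or $n$ is odd, and that $(3,n(n+1))$ equals $1$ when $n\equiv 1\pmod 3$ and $3$ otherwise; both are constant on residue classes modulo $12$, hence also modulo $72$. A short table then gives $2(4,n)(3,n(n+1))=2$ for $n\equiv 1,7$; $=12$ for $n\equiv 2,6$; $=6$ for $n\equiv 3,5,9,11$; $=8$ for $n\equiv 4$; $=4$ for $n\equiv 10$; and $=24$ for $n\equiv 0,8\pmod{12}$.

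Then I would go through the cases of Theorem~\ref{order4}. When $n\equiv 1,2,3,4,5,6,7,10\pmod{12}$ we have $j=1$, and the denominators read off the table are $2,12,6,8,6,12,2,4$ respectively, exactly as claimed. When $n\equiv 9,11\pmod{12}$ we have $j=2$ and $2(4,n)(3,n(n+1))=6$, so the denominator is $3$. The remaining residues are precisely those with $n\equiv 0$ or $8\pmod{12}$, where $2(4,n)(3,n(n+1))=24$; since the twelve classes $n\equiv 0,8,12,20,24,32,36,44,48,56,60,68\pmod{72}$ partition them, and Theorem~\ref{order4} assigns $j=1$ on $\{8,60\}$, $j=2$ on $\{24,44\}$, $j=3$ on $\{12,32,36,56\}$ and $j=6$ on $\{0,20,48,68\}$, the denominators become $24,12,8,4$ on these respective sets, again matching the statement. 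The cases $n=1,2$ were already verified in the proof of Theorem~\ref{order4}, which finishes the argument.

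There is no genuine obstacle, only case bookkeeping; the one point that deserves a sentence of care is the compatibility of the modulo-$72$ refinement used by Theorem~\ref{order4} (for $n\equiv 0,8\pmod{12}$) with the fact that $(4,n)$ and $(3,n(n+1))$ depend only on $n\bmod 12$ --- immediate from $12\mid 72$ --- so that $2(4,n)(3,n(n+1))$ is indeed constantly $24$ throughout that refinement.
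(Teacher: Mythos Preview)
Your proposal is correct and follows precisely the route indicated in the paper: the paper's own proof is the single line ``This follows from Theorem~\ref{order4} and Lemma~\ref{lcmN},'' and you have simply written out the case-by-case substitution that this sentence implies. Your bookkeeping is accurate.
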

\begin{proof}
This follows from Theorem \ref{order4} and Lemma \ref{lcmN}.
\end{proof}
\begin{thm}\label{order5}
Let $n\geq1$, $a=[n,n+1,\ldots,n+5]$, $b = F_nF_{n+1}\cdots F_{n+5}$, and $c = (5,n)$. Then
\[
z(b) =\begin{cases}
ac, & \text{if $n\equiv1,2,3,4,5,6 \pmod {12}$, or} \\
& \text{$n\equiv7,8,59,60 \pmod {72}$};\\
2ac, & \text{if $n\equiv9,10\pmod {12}$, or $n\equiv23,24,43,44 \pmod {72}$};\\
3ac, & \text{if $n\equiv11,12,31,32,35,36,55,56 \pmod {72}$}; \\
6ac, & \text{if $n\equiv0,19,20,47,48,67,68,71 \pmod {72}$}.
\end{cases}
\]
\end{thm}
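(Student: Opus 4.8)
The plan is to follow verbatim the strategy encapsulated in Theorem~\ref{idea}: since $n+i \mid a$ for $0 \le i \le 5$, we have $b \mid f_5(n) F_{aj}$ for every $j \ge 1$, where $f_5(n) = F_nF_{n+1}\cdots F_{n+5}/[F_n,\ldots,F_{n+5}]$. By Lemma~\ref{lcmF}(v), $f_5(n)$ equals $2F_{(n,5)}$ when $n \equiv 1,2 \pmod 4$ and $6F_{(n,5)}$ when $n \equiv 0,3 \pmod 4$. First I would check the small cases $n=1,2$ by hand, then assume $n \ge 3$ so that~(\ref{eq:1}) applies and $z(b) = aj$ for the least $j$ with $v_p(b) \le v_p(F_{aj})$ for every prime $p \mid f_5(n)$. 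Observe that $F_{(n,5)}$ is either $1$ (if $5 \nmid n$) or $5$ (if $5 \mid n$), and $5 = F_5$ is coprime to $2$ and $3$; moreover $v_5(F_{aj}) = v_5(aj) \ge v_5(n) = v_5(b)$ automatically since $n \mid aj$, so the factor $F_{(n,5)}$ never constrains $j$. Hence the prime $5$ (equivalently the factor $c = (5,n)$) contributes exactly the multiplicative constant $c$ to $z(b)$ and can be separated out; the real work is to determine $j$ from the primes $2$ (always) and $3$ (when $n \equiv 0,3 \pmod 4$).

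Next I would split into the three residue classes of $n$ modulo $3$, exactly mirroring the proof of Theorem~\ref{order4}. In the case $n \equiv 1 \pmod 3$ one has $3 \nmid f_5(n)$, so only $p=2$ matters; using Lemma~\ref{p-adic} to compute $v_2(b)$ (a sum of $v_2(F_{n+i})$ over the $i$ with $n+i$ in the relevant class mod $6$) and Lemma~\ref{lcmN} together with Lemma~\ref{p-adic} to compute $v_2(F_{aj}) = v_2(a) + v_2(j) + 2$ (since $6 \mid aj$), I would find that $v_2(j) = 0$ suffices in every sub-case, giving $z(b) = ac$. For $n \equiv 2 \pmod 3$ and $n \equiv 0 \pmod 3$ the analysis is again the same as in Theorem~\ref{order4}: sub-divide by $n \bmod 4$; when $n \equiv 1,2,3 \pmod 4$ only $p = 2$ is active and one reads off $j = 1$ or $j = 2$ from a single $2$-adic inequality $v_2(\text{shift}) + v_2(j) \ge (\text{bound})$; when $n \equiv 0 \pmod 4$ both $p = 2$ and $p = 3$ are active, so one must simultaneously solve $v_2(n \text{ or } n+4) + v_2(j) \ge 3$ and $v_3(n+1 \text{ or } n+3) + v_3(j) \ge 2$, which forces $j \in \{1,2,3,6\}$ according to the four combinations of $n \bmod 8$ and the relevant shift $\bmod 9$. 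Collecting these sub-cases and re-expressing the $2$-adic and $3$-adic conditions as congruences modulo $\mathrm{lcm}(8,9) = 72$ yields precisely the stated partition of residues mod $72$, with the multiplicative constant $c = (5,n)$ carried along.

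The main obstacle, as in Theorem~\ref{order4}, is purely bookkeeping rather than conceptual: one has to track, for each of the many sub-cases, which indices $n+i$ fall into which congruence class modulo $6$ (for $v_2$) and modulo $4$ (for $v_3$), compute the corresponding exact exponents from Lemma~\ref{p-adic}, and then verify that the resulting $2$-adic and $3$-adic inequalities translate correctly into the claimed residues mod $12$ and mod $72$. A subtle point worth double-checking is the interplay between the shifts appearing in $v_2(a)$ (which come from the denominator formula for $[n,\ldots,n+5]$ in Lemma~\ref{lcmN}, namely $6(5,n)(4,n(n+1))$) and those appearing in $v_2(b)$, to make sure the factor-of-$6$ in the denominator is correctly accounted for when $n \equiv 0,3 \pmod 4$ versus the factor-of-$2$ when $n \equiv 1,2 \pmod 4$; this is exactly the place where the boundary residues ($n \equiv 7,8 \pmod{72}$ landing with $z(b) = ac$, but $n \equiv 11,12 \pmod{72}$ landing with $3ac$, etc.) are decided, and it is where an arithmetic slip would most easily occur. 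Once the mod-$72$ classification for the $j$-value is pinned down, combining it with the constant $c$ gives the theorem immediately.
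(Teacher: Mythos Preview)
Your overall strategy matches the paper's, but two specific claims in your plan are wrong and would derail the computation.

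First, your $5$-adic analysis is incorrect. You assert $v_5(b)=v_5(n)$ and conclude that $F_{(n,5)}$ ``never constrains $j$''. In fact $v_5(b)=\sum_{i=0}^{5}v_5(F_{n+i})=\sum_{i=0}^{5}v_5(n+i)$. When $5\mid n$, both $n$ and $n+5$ contribute, so $v_5(b)=v_5(n)+v_5(n+5)$, whereas $v_5(a)=\max\bigl(v_5(n),v_5(n+5)\bigr)=v_5(b)-1$ (one of the two valuations is exactly $1$). Hence $v_5(F_{aj})\ge v_5(b)$ forces $v_5(j)\ge 1$ when $5\mid n$; this is precisely why the factor $c=(5,n)$ appears in $z(b)$. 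Your conclusion that $c$ is a multiplicative factor is correct, but the stated reason (``never constrains $j$'') contradicts it.

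Second, your proposed primary split by $n\bmod 3$ does not match the structure of $f_5(n)$. Lemma~\ref{lcmF}(v) gives $f_5(n)=6F_{(n,5)}$ whenever $n\equiv 0,3\pmod 4$, so $3\mid f_5(n)$ in those cases regardless of $n\bmod 3$. Your claim that ``when $n\equiv 1,2,3\pmod 4$ only $p=2$ is active'' is therefore false for $n\equiv 3\pmod 4$: the $3$-adic inequality is live there and produces exactly the residues $n\equiv 7,11\pmod{12}$ where $j>1$ can arise (these feed the mod-$72$ classes $7,11,19,23,31,35,43,47,55,59,67,71$). Missing this would collapse several of the stated cases. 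The paper avoids the problem by splitting first on $n\bmod 4$ (which governs whether $3\mid f_5(n)$) and only then on $n\bmod 3$.
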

\begin{proof}
The proof of this theorem is similar to that of Theorem \ref{order4}. So we will be brief here. It is easy to check that the result holds for $n=1,2$. So assume that $n\geq 3$. By Lemma \ref{lcmF} and Theorem \ref{idea}, we obtain that $b\mid \ell F_{(n,5)}F_{aj}$ for every $j\geq 1$ where $\ell = 2,6$. So we need to consider only $v_2$, $v_3$, and $v_5$ of $b$ and $F_{aj}$. It is easy to check using Lemmas \ref{p-adic} and \ref{lcmN} that 
\begin{align*}
&\text{when $5\mid n$, $v_5(b)\leq v_5(F_{aj})$ if and only if $v_5(j) \geq 1$, and}\\
&\text{when $5\nmid n$, $v_5(b) \leq v_5(F_{aj})$ for every $j\geq 1$.}
\end{align*}
In addition, $v_2$ and $v_3$ of $b$ and $F_{aj}$ are
$$
v_2(b) = \begin{cases}
4, &\text{if $n\equiv 1,2,3,4,5,6\pmod{12}$};\\
v_2(n+12-r)+3,&\text{if $n\equiv r\pmod{12}$ and $7\leq r\leq 12$},
\end{cases}
$$
$$
v_2(F_{aj}) = \begin{cases}
v_2(n+4-r)+v_2(j)+2, &\text{if $n\equiv r\pmod{4}$ and $1\leq r\leq 2$};\\
v_2(n+4-r)+v_2(n+8-r)+v_2(j),&\text{if $n\equiv r\pmod{4}$ and $3\leq r\leq 4$},
\end{cases}
$$
$$
v_3(b) = \begin{cases}
1, &\text{if $n\equiv 1,2,5,6\pmod{12}$};\\
2, &\text{if $n\equiv 3,4\pmod{12}$};\\
v_3(n+12-r)+1,&\text{if $n\equiv r\pmod{12}$ and $r\in\{9,10\}$};\\
v_3(n+12-r)+2,&\text{if $n\equiv r\pmod{12}$ and $r\in\{7,8,11,12\}$},
\end{cases}
$$
$$
v_3(F_{aj}) = v_3(n+3-r)+v_3(n+6-r)+v_3(j), \text{if $n\equiv r\pmod{3}$ and $1\leq r\leq 3$}.
$$
\textbf{Case 1: $n\equiv 1 \pmod {4}$}. Then $b\mid 2F_{(n,5)}F_{aj}$ for every $j\geq 1$ and we only need to consider $v_p(b)$ and $v_p(F_{aj})$ for $p = 2,5$. If $n\equiv 1 \pmod {3}$, then $v_2(F_{aj})\geq v_2(b)$. So if $5\nmid n$, we can choose $j=1$ and obtain $z(b)=a$, and if $5\mid n$, we can choose $j=5$ and obtain $z(b)=5a$. Therefore $z(b) = (5,n)a$. If $n\equiv 2 \pmod {3}$, then $v_2(F_{aj}) \geq v_2(b)$ and we similarly obtain that $z(b)=(5,n)a$. If $n\equiv 0 \pmod {3}$, then $v_2(F_{aj})\geq v_2(b) \Leftrightarrow v_2(j)\geq 1$. Thus if $5\nmid n$, we can choose $j=2$ and obtain $z(b)=2a$, and if $5\mid n$, we can choose $j=10$ and obtain $z(b)=10a$. Therefore $z(b) = 2(5,n)a$.\\
\textbf{Case 2: $n\equiv 2 \pmod {4}$}. This case is similar to Case 1 and we obtain 
$$
z(b) = 
\begin{cases}
(5,n)a, &\text{if $n\equiv 0,2\pmod 3$};\\
2(5,n)a, &\text{if $n\equiv 1\pmod 3$}.
\end{cases}
$$
\textbf{Case 3: $n\equiv 3 \pmod {4}$}.  Then $b\mid 6F_{(n,5)}F_{aj}$ for every $j\geq 1$, and we need to consider $v_p(b)$ and $v_p(F_{aj})$ for $p = 2, 3, 5$.\\
\textbf{Case 3.1: $n\equiv 1 \pmod {3}$}. Then  
$$
v_2(b)\leq v_2(F_{aj}) \Leftrightarrow v_2(n+1)+v_2(j)\geq 3, \ \text{and} \ v_3(b)\leq v_3(F_{aj}) \Leftrightarrow v_3(n+2)+v_3(j)\geq 2.
$$ 
Note that $n+1 \equiv 0,4 \pmod 8$ and $n+2 \equiv 0,3,6 \pmod 9$.
\begin{itemize}
\item[(i)] If $n+1 \equiv 0 \pmod 8$ and $n+2 \equiv 0 \pmod 9$, then $v_2(j)=v_3(j)=0$, and so $z(b) = (5,n)a = \frac{72(5,n)a}{(8,n+1)(9,n+2)}$.
\item[(ii)] If $n+1 \equiv 0 \pmod 8$ and $n+2 \equiv 3,6 \pmod 9$, then $v_2(j)=0$ and $v_3(j)=1$, and so $z(b) = 3(5,n)a = \frac{72(5,n)a}{(8,n+1)(9,n+2)}$.
\item[(iii)] If $n+1 \equiv 4 \pmod 8$ and $n+2 \equiv 0 \pmod 9$, then $v_2(j)=1$ and $v_3(j)=0$, and so $z(b) = 2(5,n)a = \frac{72(5,n)a}{(8,n+1)(9,n+2)}$.
\item[(iv)] If $n+1 \equiv 4 \pmod 8$ and $n+2 \equiv 3,6 \pmod 9$, then $v_2(j)=v_3(j)=1$, and so $z(b) = 6(5,n)a = \frac{72(5,n)a}{(8,n+1)(9,n+2)}$.
\end{itemize}
\textbf{Case 3.2: $n\equiv 2 \pmod {3}$}. This case is similar to Case 3.1 and we obtain
\begin{align*}
v_2(b)&\leq v_2(F_{aj}) \Leftrightarrow v_2(n+5)+v_2(j)\geq 3, \\
v_3(b)&\leq v_3(F_{aj}) \Leftrightarrow v_3(n+4)+v_3(j)\geq 2, \; \text{and}\\
z(b) &= \frac{72(5,n)a}{(8,n+5)(9,n+4)}.
\end{align*}
\textbf{Case 3.3: $n\equiv 0 \pmod {3}$}. This case leads to $z(b) = (5,n)a$.\\
\textbf{Case 4: $n\equiv 0 \pmod {4}$}. Similar to Case 3, we obtain 
$$
z(b) = \begin{cases}
(5,n)a, &\text{if $n\equiv 1\pmod 3$};\\
\frac{72(5,n)a}{(8,n)(9,n+1)},&\text{if $n\equiv 2\pmod 3$};\\
\frac{72(5,n)a}{(8,n+4)(9,n+3)}, &\text{if $n\equiv 0\pmod 3$}.
\end{cases}
$$
This completes the proof.
\end{proof}
We can obtain the following result from the proof of Theorem \ref{order5}.
\begin{corollary}\label{cororder5}
Let $n\geq1$, $a=[n,n+1,\ldots,n+5]$, $b = F_nF_{n+1}\cdots F_{n+5}$, and $c = (5,n)$. Then
\[
z(b) =\begin{cases}
ac, & \text{if $n\equiv1,2,3,4,5,6 \pmod {12}$} \\
2ac, & \text{if $n\equiv9,10\pmod {12}$};\\
\frac{72(5,n)a}{(8,n+|r-8|)(9,n+|r-9|)}, &\text{if $n\equiv r\pmod{12}$ and $r\in \{7,8,12\}$};\\
\frac{72(5,n)a}{(8,n+5)(9,n+4)}, &\text{if $n\equiv 11\pmod{12}$}.
\end{cases}
\]
\end{corollary}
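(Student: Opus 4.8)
The plan is to obtain the corollary purely by bookkeeping from the proof of Theorem \ref{order5}: re-index the case analysis carried out there by the residue $r$ of $n$ modulo $12$, and observe that in the four classes where $z(b)$ is neither $ac$ nor $2ac$ the value produced in that proof can be repackaged into the single quotient displayed in the statement.

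First I would recall that the proof of Theorem \ref{order5} splits into Cases 1--4 according to $n \bmod 4$ and, inside each, into subcases according to $n \bmod 3$; by the Chinese Remainder Theorem each admissible pair $(n \bmod 4,\, n \bmod 3)$ singles out exactly one residue $r$ with $1\le r\le 12$. I would then run through the twelve subcases and record which value of $z(b)$ each yields, using throughout that $z(b)$ is already expressed in terms of $(5,n)$ in that proof, so that writing $c=(5,n)$ subsumes both the $5\mid n$ and $5\nmid n$ situations. The outcome is: the subcases giving $(5,n)a$ are exactly those with $r\in\{1,2,3,4,5,6\}$; those giving $2(5,n)a$ are exactly $r\in\{9,10\}$; Case 3.1 ($n\equiv 3\pmod 4$, $n\equiv 1\pmod 3$) is $r=7$ with value $\frac{72(5,n)a}{(8,n+1)(9,n+2)}$; Case 4 with $n\equiv 2\pmod 3$ is $r=8$ with value $\frac{72(5,n)a}{(8,n)(9,n+1)}$; Case 4 with $n\equiv 0\pmod 3$ is $r=12$ (that is, $n\equiv 0\pmod{12}$) with value $\frac{72(5,n)a}{(8,n+4)(9,n+3)}$; and Case 3.2 is $r=11$ with value $\frac{72(5,n)a}{(8,n+5)(9,n+4)}$. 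The small values $n=1,2$ fall into the $r=1,2$ classes and were already verified directly inside the proof of Theorem \ref{order5}.

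It then remains to match the three formulas for $r\in\{7,8,12\}$ against the proposed uniform expression. The only thing to check is the elementary arithmetic that $|r-8|$ equals $1,0,4$ and $|r-9|$ equals $2,1,3$ for $r=7,8,12$ respectively --- precisely the offsets appearing above --- so that $z(b)=\frac{72(5,n)a}{(8,n+|r-8|)(9,n+|r-9|)}$ holds in each of these classes. For $r=11$ the same computation gives $|r-8|=3$ and $|r-9|=2$, which do \emph{not} match the offsets $5$ and $4$ coming from Case 3.2: the discrepancy arises because for $n\equiv 3\pmod 4$ both $n+1$ and $n+5$ are divisible by $4$, and it is $n+5$ (not $n+1$) whose $2$-adic valuation governs the relevant inequality there, with the analogous shift in the $3$-adic part. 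Hence $r=11$ cannot be folded into the uniform formula and must be listed separately, exactly as in the statement. I do not expect any genuine obstacle; the one point demanding care is this verification that the offsets read off from each subcase of Theorem \ref{order5}'s proof really do coincide with $n+|r-8|$ and $n+|r-9|$ for $r\in\{7,8,12\}$ while failing to do so for $r=11$.
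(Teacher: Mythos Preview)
Your proposal is correct and follows essentially the same approach as the paper, which simply states that the corollary can be obtained from the proof of Theorem \ref{order5}. You have in fact supplied more detail than the paper itself: you carry out the CRT re-indexing of the twelve subcases explicitly and verify the numerical identities $|r-8|,|r-9|$ against the offsets appearing in Cases 3.1, 3.2, and 4, including the observation that $r=11$ fails to fit the uniform pattern.
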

Next we give the formula of $z(F_nF_{n+1}\cdots F_{n+6})$. It is shorter to state it in the form similar to Corollary \ref{cororder5} than Theorem \ref{order5}. 
\begin{thm} \label{order6}
Let $n\geq1$, $a=[n,n+1,\ldots, n+6]$, $b = F_nF_{n+1}\cdots F_{n+6}$, and $c = (5,n(n+1))$. Then
$z(b) =$ 
\[
\begin{cases}
ac, & \text{if $n\equiv1,2,3,4,5 \pmod {12}$};\\
\frac{(64)(27)ac}{(64,n+2)(27,n(n+3))}, & \text{if $n\equiv 6 \pmod {24}$};\\
\frac{(8)(27)ac}{(27,n(n+3))}, & \text{if $n\equiv 18 \pmod {24}$};\\
\frac{72ac}{(8,n-r)(9,n-r)}, & \text{if $n\equiv r \pmod {12}$ and $r\in\{7,8\}$}; \\
4ac, & \text{if $n\equiv9 \pmod {12}$}; \\
\frac{72ac}{(8,n+6)(9,n+5)}, & \text{if $n\equiv 10 \pmod {12}$}; \\
\frac{72ac}{(8,n+5)(9,n+4)}, & \text{if $n\equiv 11 \pmod {12}$}; \\
\frac{(64)(27)ac}{(64,n+4)(27,(n+3)(n+6))}, & \text{if $n\equiv 0 \pmod {12}$}.
\end{cases}
\] 
\end{thm}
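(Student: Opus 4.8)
The plan is to apply Theorem~\ref{idea} with $k=6$, exactly as in the proofs of Theorems~\ref{order4} and~\ref{order5}. By Lemma~\ref{lcmF}(vi), the quotient $f_6(n)=\frac{F_nF_{n+1}\cdots F_{n+6}}{[F_n,\ldots,F_{n+6}]}$ equals $2F_{(n(n+1),5)}F_{(n,6)}$ when $n\equiv 1\pmod 4$ and $6F_{(n(n+1),5)}F_{(n,6)}$ when $n\equiv 0,2,3\pmod 4$. Since $F_{(n(n+1),5)}\in\{1,5\}$ and $F_{(n,6)}\in\{1,2,8\cdot v_2(n)\text{-ish powers}\}$ — more precisely $F_{(n,6)}=1$ if $6\nmid n$, $F_{(n,6)}=2$ if $n\equiv 3\pmod 6$ (impossible here when combined with $4\mid$ constraints, but handled anyway), and $F_{(n,6)}=8\,v_2(F_n)$-type value when $6\mid n$ — the only primes dividing $f_6(n)$ are $2,3,5$. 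So by Theorem~\ref{idea}(ii) we must find the least $j\geq 1$ with $v_p(b)\leq v_p(F_{aj})$ for $p=2,3,5$, where $a=[n,n+1,\ldots,n+6]$ is given explicitly by the last formula in Lemma~\ref{lcmN}.

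First I would dispose of the small cases $n=1,2$ by direct computation, then assume $n\geq 3$. Next I would record, once and for all, the three relevant valuations of $b$ and of $F_{aj}$ as functions of the residue of $n$ modulo a suitable modulus (here $\operatorname{lcm}(12,\text{powers})$, effectively modulo $12$ refined to $24$, $64$, and $27$ where needed), using Lemma~\ref{p-adic} for $v_2,v_3,v_5$ of Fibonacci numbers and Lemma~\ref{lcmN} together with the additivity of $v_p$ for $v_p(a)$. The key observations are: $v_5(b)\leq v_5(F_{aj})$ iff $v_5(j)\geq 1$ when $5\mid n(n+1)$ and holds automatically otherwise (this produces the factor $c=(5,n(n+1))$, noting $5\mid n(n+1)$ forces the ``$+1$'' in the exponent of $5$ to require $j$ divisible by $5$); $v_2(b)$ depends on which of $F_n,\ldots,F_{n+6}$ are even (one term if $n\not\equiv 0,3\pmod 6$, and when $6\mid n$ the term $F_n$ contributes $v_2(n)+2$); and similarly $v_3(b)=v_3(F_n)+v_3(F_{n+4})$ picks up the ``$+1$'' correction from Lemma~\ref{p-adic} whenever $4\mid n$ or $4\mid n+4$. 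Then I would split into cases on $n\bmod 4$ (determining whether $f_6$ carries a factor $2$ or $6$) and subcases on $n\bmod 3$ (determining which Fibonacci terms are divisible by $3$), and within the ``bad'' subcases refine to $n\bmod 8$, $n\bmod 64$, $n\bmod 9$, and $n\bmod 27$ to pin down $v_2(j)$ and $v_3(j)$ exactly; the smallest $j$ is then $2^{v_2(j)}3^{v_3(j)}5^{v_5(j)}$ and $z(b)=aj$. Converting the resulting case list (indexed by fine residues) into the stated coarser form — e.g. absorbing the $2$-power and $3$-power corrections into the compact expressions $\frac{72ac}{(8,n-r)(9,n-r)}$ and $\frac{(64)(27)ac}{(64,n+4)(27,(n+3)(n+6))}$ — is the bookkeeping step analogous to the passage from Theorem~\ref{order5} to Corollary~\ref{cororder5}.

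The main obstacle, and the only genuinely new feature compared to the $k=4,5$ cases, is the appearance of a \emph{higher} power of $2$ in $f_6(n)$ via $F_{(n,6)}$ when $6\mid n$: by Lemma~\ref{lcmF}(vi) the denominator then contains $F_6=8$ (and more when $v_2(n)>0$), so $v_2(b)-v_2([F_n,\ldots,F_{n+6}])$ can be as large as $v_2(n)+3$, and correspondingly we may need $v_2(j)$ as large as $3$ more than in the generic case — this is precisely what forces the modulus $64=2^6$ in the cases $n\equiv 6\pmod{24}$ and $n\equiv 0\pmod{12}$. One must carefully track $v_2(a)$ in these cases: from the last line of Lemma~\ref{lcmN}, $a=\frac{n(n+1)\cdots(n+6)}{12(3,n)(5,n(n+1))(4,(n+2)(2,n(n+1)/2))}$, and when $6\mid n$ the factor $(4,(n+2)(2,n(n+1)/2))$ behaves differently according to $v_2(n+2)$, so $v_2(a)=v_2(n(n+1)\cdots(n+6))-2-v_2\!\big((4,(n+2)(2,n(n+1)/2))\big)$ must be computed residue-by-residue modulo $64$. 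Similarly $27=3^3$ enters through the interplay of $v_3(b)$ (which can reach $v_3(n)+2$ when $4\mid n$ and $12\mid n$, hence $9\mid n$ is possible) with $v_3(a)$, requiring refinement modulo $27$ in the cases involving $(27,n(n+3))$ and $(27,(n+3)(n+6))$. Once these two ``deep'' divisibility ladders are charted, every remaining case is routine and mirrors the arguments already given for Theorems~\ref{order4} and~\ref{order5}; I would present the $n\equiv 0,6\pmod{12}$ cases in full and merely summarize the rest.
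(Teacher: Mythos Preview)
Your approach is exactly the paper's: apply Theorem~\ref{idea} together with Lemma~\ref{lcmF}(vi), reduce to checking $v_p(b)\leq v_p(F_{aj})$ for $p\in\{2,3,5\}$, and compute these valuations residue-by-residue using Lemmas~\ref{p-adic} and~\ref{lcmN} (the paper's proof in fact records nothing beyond the resulting tables of $v_2$, $v_3$, $v_5$). A few of your side remarks are garbled, though not fatally: $F_{(n,6)}$ is simply $F_{\gcd(n,6)}\in\{1,1,2,8\}$ and does \emph{not} grow with $v_2(n)$, so $v_2(f_6(n))=4$ exactly (not ``$v_2(n)+3$'') when $6\mid n$; the reason $64$ enters is rather that $v_2(b)=v_2(n+6)+6$ while $v_2(F_{aj})=v_2(n+2)+v_2(j)+2$ in the case $n\equiv 6\pmod{24}$, forcing $v_2(j)\geq 6-v_2(n+2)$.
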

\begin{proof}
The proof of this theorem follows the same idea used previously. So we will only give the evaluation of $v_2$, $v_3$, and $v_5$ of $b$ and $F_{aj}$. Similar to the proof of Theorem \ref{order5}, we have 
\begin{align*}
&\text{when $5\mid n(n+1)$, $v_5(b)\leq v_5(F_{aj}) \Leftrightarrow v_5(j) \geq 1$,}\\
&\text{when $5\nmid n(n+1)$, $v_5(b)\leq v_5(F_{aj})$ for every $j \geq 1$,}
\end{align*}
$$
v_2(F_{aj}) = 
\begin{cases}
v_2(n+3)+v_2(j)+2, &\text{if $n\equiv 1\pmod 4$};\\
v_2(n+6)+v_2(j)+3, &\text{if $n\equiv 2\pmod 8$};\\
v_2(n+2)+v_2(j)+2, &\text{if $n\equiv 6\pmod 8$};\\
v_2(n+1)+v_2(n+5)+v_2(j), &\text{if $n\equiv 3\pmod 4$};\\
v_2(n)+v_2(n+4)+v_2(j), &\text{if $n\equiv 0\pmod 4$},
\end{cases}
$$
$$
v_3(F_{aj}) = 
\begin{cases}
v_3(n+2)+v_3(n+5)+v_3(j), &\text{if $n\equiv 1\pmod 3$};\\
v_3(n+1)+v_3(n+4)+v_3(j), &\text{if $n\equiv 2\pmod 3$};\\
v_3(n)+v_3(n+3)+v_3(n+6)+v_3(j)-1, &\text{if $n\equiv 0\pmod 3$},
\end{cases}
$$
$$
v_2(b) = 
\begin{cases}
4, &\text{if $n\equiv 1,2,4,5\pmod {12}$};\\
5, &\text{if $n\equiv 3\pmod {12}$};\\
v_2(n+12-r)+3, &\text{if $n\equiv r\pmod {12}$ and $r\in \{7,8,10,11\}$};\\
v_2(n+3)+4, &\text{if $n\equiv 9\pmod {12}$};\\
v_2(n+12-r)+6, &\text{if $n\equiv r\pmod {12}$ and $r\in \{6,12\}$},
\end{cases}
$$
$$
v_3(b) = 
\begin{cases}
1, &\text{if $n\equiv 1,5\pmod {12}$};\\
2, &\text{if $n\equiv 2,3,4\pmod {12}$};\\
v_3(n+12-r)+2, &\text{if $n\equiv r\pmod {12}$ and $r\in \{6,7,8,10,11,12\}$};\\
v_3(n+3)+1, &\text{if $n\equiv 9\pmod {12}$}.
\end{cases}
$$
\end{proof}
\section{The Case of Lucas Numbers}\label{sec4}

Recall that Marques \cite{Ma1} and Marques and Trojovsk\'y \cite{MTr} obtain, respectively, the formula for $z(L_nL_{n+1}\cdots L_{n+k})$ in the case $1\leq k\leq 3$ and in the case $k=4$. Our method can be applied to any case $k\geq 1$. We give the outline of the calculation as follows.

First of all, similar to Lemma \ref{lcmF}, we need a formula for the least common multiple of consecutive Lucas numbers, which is given below.
\begin{lemma} \label{lcmL} 
For each $k\geq 1$, let $P_k = L_nL_{n+1}L_{n+2}\cdots L_{n+k}$. Then the following statements hold for every $n \geq 1$.
\begin{itemize}
\item[(i)] $[L_n,L_{n+1}]=L_nL_{n+1}$. 
\item[(ii)] $[L_n,L_{n+1},L_{n+2}]=L_nL_{n+1}L_{n+2}$. 
\item[(iii)] $[L_n,L_{n+1},L_{n+2},L_{n+3}]=\frac{P_3}{F_{(n,3)}}$. 
\item[(iv)] $
[L_n,L_{n+1},L_{n+2},L_{n+3},L_{n+4}]=\begin{cases}
\frac{P_4}{F_{(n-2,4)}}, & \text{if $n\equiv1\pmod {3}$};\\
\frac{P_4}{2F_{(n-2,4)}}, & \text{if $n\equiv0,2\pmod {3}$}.
\end{cases}$ 
\item[(v)] $
[L_n,L_{n+1},L_{n+2},\ldots,L_{n+5}]=\begin{cases}
\frac{P_5}{6}, & \text{if $n\equiv1,2\pmod {4}$};\\
\frac{P_5}{2}, & \text{if $n\equiv0,3\pmod {4}$}.
\end{cases}$
\item[(vi)] 
$[L_n,L_{n+1},L_{n+2},\ldots,L_{n+6}]
= \begin{cases}
\frac{P_6}{3\cdot 2^{v_2(L_n)+1}}, & \text{if $n\equiv0,1,2\pmod {4}$};\\
\frac{P_6}{2^{v_2(L_n)+1}}, & \text{if $n\equiv3\pmod {4}$}.
\end{cases}$
\end{itemize}
\end{lemma}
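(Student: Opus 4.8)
The plan is to follow the proof of Lemma~\ref{lcmF} almost verbatim, the only structural change being that the clean identity $(F_m,F_n)=F_{(m,n)}$ must be replaced by the classical two-valued formula for the greatest common divisor of two Lucas numbers:
\[
(L_m,L_n)=\begin{cases}L_{(m,n)},& \text{if }v_2(m)=v_2(n);\\ 2,& \text{if }v_2(m)\ne v_2(n)\text{ and }3\mid(m,n);\\ 1,& \text{otherwise.}\end{cases}
\]
First I would specialize this to the gcds that actually occur among $L_n,L_{n+1},\dots,L_{n+6}$: the numbers $L_n,L_{n+1},L_{n+2}$ are pairwise coprime, $(L_n,L_{n+1})=(L_n,L_{n+2})=(L_n,L_{n+5})=1$, $(L_n,L_{n+3})=F_{(n,3)}$, $(L_n,L_{n+4})=F_{(n-2,4)}$, and $(L_n,L_{n+6})$ equals $2^{v_2(L_n)+1}$ when $3\mid n$ and equals $1$ otherwise. (The Fibonacci shorthands are used because $F_3=2$ and $F_4=3$ reproduce exactly the value sets $\{1,2\}$ and $\{1,3\}$ that arise, whereas $L_3=4$, $L_4=7$ would not.) Alongside these I would record, from the well-known identity $F_{2n}=F_nL_n$ together with Lemma~\ref{p-adic}, the values of $v_2(L_n)$ and the facts that $3\mid L_k$ precisely when $k\equiv2\pmod4$, with $v_3(L_k)=v_3(k)+1$ in that case.

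With these facts in hand, (i) and (ii) follow at once from pairwise coprimality. For (iii)--(vi) I would run the same step-by-step computation as in Lemma~\ref{lcmF}: using $[a_1,\dots,a_m]=\big[[a_1,\dots,a_{m-1}],a_m\big]$, $[a,b]=ab/(a,b)$, and the identity $c\,(a/c,b)=(a,cb)$ valid when $c\mid a$, one obtains
\[
[L_n,\dots,L_{n+k}]=\frac{P_k}{(P_{k-1},\,D_{k-1}L_{n+k})},
\]
where $D_{k-1}$ denotes the denominator produced at the previous step. It then remains to evaluate $(P_{k-1},D_{k-1}L_{n+k})$: one splits into residue classes of $n$ — modulo $3$ for (iv), modulo $4$ for (v) and (vi), refining to modulo $6$ or $12$ wherever the $2$- and $3$-adic orders require it — pulls the shared powers of $2$ and $3$ out in front exactly as in the Fibonacci argument, and then applies the specialized coprimality facts above to the coprime leftover factors.

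The main obstacle will be the $2$-adic (and, to a lesser extent, $3$-adic) bookkeeping in (v) and especially (vi). When $3\mid n$ the numbers $L_n,L_{n+3},L_{n+6}$ are simultaneously even, so $P_k$ is no longer a product of pairwise coprime factors, and one must determine how much of its $2$-content survives in the gcd with $D_{k-1}L_{n+k}$; the contrast between $n\equiv0$ and $n\equiv3\pmod6$ — that is, $v_2(L_n)=1$ versus $v_2(L_n)=2$ — is exactly what produces the factor $2^{v_2(L_n)+1}$ in (vi), and one checks that the same expression also accounts for the case $3\nmid n$. The $3$-adic part is milder but still needs the observation that two indices that are $\equiv2\pmod4$ and differ by $4$ cannot both be divisible by $3$, so the smaller of their $3$-adic orders is $1$; this is what keeps the factor $3$ in (v) and (vi) from growing. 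Apart from this, the computation is routine casework strictly parallel to Lemma~\ref{lcmF}; note that, in contrast to the Fibonacci case, no factor analogous to $F_{(n(n+1),5)}$ appears here, since $(L_n,L_{n+5})=(L_{n+1},L_{n+6})=1$.
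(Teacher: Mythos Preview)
Your approach is essentially what the paper intends: the paper does not write out a proof of this lemma but simply records the Lucas gcd formula (Lemma~\ref{ABClucas}) and Lengyel's $p$-adic orders (Lemma~\ref{p-adicl}) as the relevant tools, implicitly directing the reader to rerun the argument of Lemma~\ref{lcmF}. Your plan does exactly that, and the specializations you list for the pairwise gcds are correct except for one.

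The slip is in your value of $(L_n,L_{n+6})$ when $3\mid n$: it should be $2^{v_2(L_n)}$, not $2^{v_2(L_n)+1}$. Indeed, if $n\equiv 3\pmod 6$ then $(n,n+6)=3$ with $n/3$ and $(n+6)/3$ both odd, so $(L_n,L_{n+6})=L_3=4=2^{v_2(L_n)}$; if $n\equiv 0\pmod 6$ then $(n,n+6)=6$ with $n/6$ and $(n+6)/6$ of opposite parity and $3\mid 6$, so $(L_n,L_{n+6})=2=2^{v_2(L_n)}$. The extra factor of~$2$ that lifts the denominator in (vi) to $2^{v_2(L_n)+1}$ does not come from this gcd but from $D_5$: since $v_2(D_5)=1$ and $v_2(L_{n+6})=v_2(L_n)$ (the $2$-adic order of $L_k$ depends only on $k\bmod 6$), one has $v_2(D_5L_{n+6})=v_2(L_n)+1\le 3=v_2(P_5)$, so the $2$-part of $(P_5,D_5L_{n+6})$ is exactly $2^{v_2(L_n)+1}$. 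With this correction the casework goes through as you describe.
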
  
In the proof of Lemma \ref{lcmL} and the others, it is useful to recall the following well-known results.
\begin{lemma}\label{ABClucas}
Let $m$, $n$ be positive integers and $d = (m,n)$. Then the following statements hold.
\begin{itemize}
\item[(i)] For $n\geq 2$, $L_{n}\mid F_{m}$ if and only if $2n\mid m$.
\item[(ii)] $(L_m, L_n) = \begin{cases}
L_d, &\text{if $\frac md$ and $\frac nd$ are odd};\\
2, &\text{if ($\frac md$ or $\frac nd$ is even) and $3\mid d$};\\
1, &\text{if ($\frac md$ or $\frac nd$ is even) and $3\nmid d$}. 
\end{cases}$
\item[(iii)] $(F_m, L_n) = \begin{cases}
L_d, &\text{if $\frac md$ is even and $\frac nd$ is odd};\\
2, &\text{if ($\frac md$ is odd or $\frac nd$ is even) and $3\mid d$};\\
1, &\text{if ($\frac md$ is odd or $\frac nd$ is even) and $3\nmid d$}. 
\end{cases}$
\end{itemize}
\end{lemma}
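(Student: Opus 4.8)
The three parts rest on two standard identities, $F_{2k}=F_kL_k$ and $L_k^2-5F_k^2=4(-1)^k$, together with the Fibonacci facts already recorded: the gcd rule $(F_m,F_n)=F_{(m,n)}$ from \eqref{eq:gcdF}, the divisibility criterion \eqref{eq:1}, and the $2$-adic formula of Lemma \ref{p-adic}. The plan is to establish (i) first and then use it, together with these identities, to reduce (ii) and (iii) to a prime-by-prime computation.

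For (i), the \emph{if} direction is immediate: if $2n\mid m$, then $L_n\mid F_{2n}$ (because $F_{2n}=F_nL_n$) and $F_{2n}\mid F_m$ by \eqref{eq:1}, so $L_n\mid F_m$. For the \emph{only if} direction, suppose $L_n\mid F_m$; since $L_n\mid F_{2n}$ as well, \eqref{eq:gcdF} gives $L_n\mid(F_m,F_{2n})=F_e$, where $e:=(m,2n)$ divides $2n$, and it suffices to prove $e=2n$. If $e\mid n$, then $L_n\mid F_e\mid F_n$, so $L_n\le F_n$, contradicting $L_n=F_{n-1}+F_{n+1}>F_n$. If $e\nmid n$, put $t=2n/e$; then $t$ is odd and, for a proper divisor, $t\ge 3$, whence $e=2s$ with $s=n/t\le n/3$, and $L_n\mid F_e=F_{2s}$ forces $L_n\le F_{2s}\le F_{n-1}<L_n$ because $2s\le n-1$ for $n\ge 3$. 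Both are impossible, so $e=2n$; the small cases $n=1,2$ are checked by hand.

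For (ii) and (iii) I would work one prime at a time, writing $d=(m,n)$. The identity $L_k^2-5F_k^2=4(-1)^k$ shows that an odd prime dividing both $F_k$ and $L_k$ would divide $4$, and $5\nmid L_k$; hence $F_k$ and $L_k$ share only a power of $2$, and $F_{2k}=F_kL_k$ yields $v_p(L_k)=v_p(F_{2k})-v_p(F_k)$ for all $p$. For an odd prime $p$ this means $v_p(L_k)=v_p(F_{2k})$ when $p\nmid F_k$ and $v_p(L_k)=0$ otherwise. Substituting into $v_p((F_m,L_n))=\min(v_p(F_m),v_p(L_n))$ and $v_p((L_m,L_n))=\min(v_p(L_m),v_p(L_n))$, and using \eqref{eq:gcdF} together with the elementary index computation $(m,2n)=d$ or $2d$ according as $m/d$ is odd or even (and $(2m,2n)=2d$ always), reduces the odd part of each gcd to that of $F_d$ or $F_{2d}=F_dL_d$. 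The $F_d$ factor contributes nothing, since its prime divisors already divide $F_n$ (respectively $F_m$ and $F_n$), so what survives is the odd part of $L_d$; a short check — the relation $z(p)\mid 2d$, $z(p)\nmid d$ forces $v_2(z(p))=v_2(d)+1$, which is incompatible with $z(p)\mid n$ once $n/d$ is odd — confirms that this odd part equals $L_d$ exactly in the stated cases ($m/d$ even and $n/d$ odd for (iii); $m/d$ and $n/d$ both odd for (ii)) and is trivial otherwise.

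The hard part will be the $2$-adic bookkeeping, which is precisely what produces the three-way split. From Lemma \ref{p-adic} and $v_2(L_k)=v_2(F_{2k})-v_2(F_k)$ one records $v_2(L_k)=0$ if $3\nmid k$, $v_2(L_k)=2$ if $k\equiv3\pmod 6$, and $v_2(L_k)=1$ if $6\mid k$; in particular $2\mid L_k\iff 3\mid k$, so $2$ divides $(L_m,L_n)$ or $(F_m,L_n)$ exactly when $3\mid d$. The delicate point is to verify that in the regime where the answer is $2$ the relevant minimum of $2$-adic valuations is exactly $1$ and not $2$: this holds because $m/d$ or $n/d$ being even forces the corresponding index to be $\equiv0\pmod 6$ rather than $\equiv3\pmod 6$, so its Lucas value carries a single factor of $2$. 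Merging this with the odd-prime computation of the previous paragraph gives the three cases of (ii) and (iii) as stated.
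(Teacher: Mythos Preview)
The paper does not prove this lemma at all: it is introduced with ``it is useful to recall the following well-known results'' and simply stated, so there is no paper argument to compare against. Your proposal, by contrast, supplies an actual proof sketch, and the strategy --- proving (i) directly from $F_{2n}=F_nL_n$ and \eqref{eq:gcdF}, then handling (ii) and (iii) prime by prime via $v_p(L_k)=v_p(F_{2k})-v_p(F_k)$ together with the index identities $(m,2n)\in\{d,2d\}$ and $(2m,2n)=2d$ --- is the standard one and is correct.

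One loose end in the $2$-adic paragraph: for (iii) the ``answer is $2$'' regime is $\bigl(\tfrac md$ odd or $\tfrac nd$ even$\bigr)$ with $3\mid d$, which includes the sub-case where \emph{both} $\tfrac md$ and $\tfrac nd$ are odd. Your justification (``$m/d$ or $n/d$ being even forces the corresponding index to be $\equiv 0\pmod 6$'') does not cover that sub-case. It is easily patched: if $2\mid d$ then $6\mid n$ gives $v_2(L_n)=1$, while if $2\nmid d$ then $m$ is odd with $3\mid m$, so $m\equiv 3\pmod 6$ and $v_2(F_m)=1$; either way the minimum is exactly $1$. With this small addition the outline goes through.
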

Lengyel's result on $p$-adic orders of Lucas numbers is also an important tool.
\begin{lemma} \label{p-adicl} (Lengyel \cite{Le})
For each $n\geq1$, let $v_{p}(n)$ be the $p$-adic order of $n$. For all primes $p\neq 5$, we have
$$
v_{2}(L_{n})=\begin{cases}
0, & \text{if $n\equiv1,2\pmod {3}$};\\
2, & \text{if $n\equiv3\pmod {6}$};\\
1, & \text{if $n\equiv0\pmod {6}$, and}
\end{cases}
$$
$$
v_{p}(L_{n})=\begin{cases}
v_{p}(n)+v_{p}(F_{z(p)}), & \text{if $z(p)$ is even and $n \equiv \frac{z(p)}{2}\pmod {z(p)}$};\\
0, & \text{otherwise}.
\end{cases}
$$
\end{lemma}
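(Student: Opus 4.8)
The plan is to reduce everything to the Fibonacci valuations already recorded in Lemma \ref{p-adic} by exploiting the well-known identity $F_{2n} = F_nL_n$. Since $F_n\geq 1$ for all $n\geq 1$, this gives $L_n = F_{2n}/F_n$, and hence
\[
v_p(L_n) = v_p(F_{2n}) - v_p(F_n)
\]
for every prime $p$ and every $n\geq 1$. Because $F_n\mid F_{2n}$, the right-hand side is always a nonnegative integer, so the whole problem becomes one of subtracting two instances of the formula in Lemma \ref{p-adic}, one at argument $2n$ and one at argument $n$. I would carry out this subtraction separately for $p=2$ and for odd $p\neq 5$.

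For $p=2$ I would split according to $n\bmod 6$. If $n\equiv 1,2\pmod 3$, then both $n$ and $2n$ are $\not\equiv 0\pmod 3$, so $v_2(F_n)=v_2(F_{2n})=0$ and $v_2(L_n)=0$. If $3\mid n$, then $6\mid 2n$, so $v_2(F_{2n})=v_2(2n)+2=v_2(n)+3$ by Lemma \ref{p-adic}; subtracting $v_2(F_n)$, which equals $1$ when $n\equiv 3\pmod 6$ and $v_2(n)+2$ when $n\equiv 0\pmod 6$, yields $v_2(L_n)=2$ and $v_2(L_n)=1$ respectively. This reproduces the three cases in the statement.

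For an odd prime $p\neq 5$ I would write $z=z(p)$ and use that $v_p(2n)=v_p(n)$ since $p\neq 2$. The behaviour then hinges on the parity of $z$. If $z$ is odd, then $z\mid n\iff z\mid 2n$, and in either the divisible or the non-divisible case the two applications of Lemma \ref{p-adic} cancel, giving $v_p(L_n)=0$; note that when $z$ is odd the stated first alternative never applies, so this is the desired ``otherwise'' value. If $z=2w$ is even, the key point is that $z\mid 2n\iff w\mid n$, while $z\mid n$ is strictly stronger. The congruence $n\equiv w\pmod z$ describes exactly those $n$ with $w\mid n$ but $z\nmid n$; for such $n$ one gets $v_p(F_{2n})=v_p(n)+v_p(F_z)$ and $v_p(F_n)=0$, hence $v_p(L_n)=v_p(n)+v_p(F_{z(p)})$. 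In every remaining residue class modulo $z$, one checks that $z\mid n$ forces cancellation and $w\nmid n$ forces both valuations to vanish, so $v_p(L_n)=0$.

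The main obstacle is the bookkeeping in the even-$z$ case: one must verify that the single congruence $n\equiv z(p)/2\pmod{z(p)}$ captures precisely the situation where $F_{2n}$ is divisible by the $p$-part coming from $F_{z(p)}$ but $F_n$ is not, and that all other residues split cleanly into a ``both divisible'' class (where the $v_p(n)+v_p(F_z)$ terms cancel) and a ``neither divisible'' class (where both vanish). Once this trichotomy of residues modulo $z(p)$ is laid out, the rest is immediate subtraction.
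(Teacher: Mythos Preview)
Your argument is correct. The identity $F_{2n}=F_nL_n$ gives $v_p(L_n)=v_p(F_{2n})-v_p(F_n)$, and your case analysis (modulo $6$ for $p=2$, and according to the parity of $z(p)$ and the residue of $n$ modulo $z(p)$ for odd $p\neq 5$) goes through exactly as you describe. The only point worth stating a touch more explicitly is that when $z=2w$ is even the three possibilities $z\mid n$, $n\equiv w\pmod z$, and $w\nmid n$ are mutually exclusive and exhaustive, but you have in effect said this.

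The paper itself gives no proof of this lemma: it is quoted directly from Lengyel \cite{Le}, just as Lemma~\ref{p-adic} is. Your contribution is therefore not an alternative proof but an actual proof where the paper simply cites the literature. What your route buys is self-containment within the paper: once Lemma~\ref{p-adic} is accepted, your subtraction argument recovers Lemma~\ref{p-adicl} without any further appeal to \cite{Le}. Lengyel's original treatment handles the Fibonacci and Lucas cases in parallel via the identities $F_{kn}=F_nL_n L_{2n}\cdots L_{2^{k-1}n}$ and related machinery, whereas you are exploiting only the $k=1$ instance together with the already-stated Fibonacci formula, which is more economical for the present purpose.
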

Finally, we adjust Theorem \ref{idea} a little, so that it is easily applied in the Lucas case.
\begin{theorem} \label{ideal}
Let $n\geq 2$, $k\geq 1$, $a = 2[n,n+1,\ldots,n+k]$, $b = L_nL_{n+1}\cdots L_{n+k}$ and $f_k(n)=\frac{L_nL_{n+1}L_{n+2}\cdots L_{n+k}}{[L_n,L_{n+1},L_{n+2},\ldots ,L_{n+k}]}$. Then the following holds.
\begin{itemize}
\item[(i)] $b\mid f_k(n)F_{aj}$ for every $j\geq 1$.
\item[(ii)] $z(b)=aj$ where $j$ is the smallest positive integer such that $b\mid F_{aj}$. In fact, $j$ is the smallest positive integer such that $v_p(b) \leq v_p(F_{aj})$ for every prime $p$ dividing $f_k(n)$.
\end{itemize}
\end{theorem}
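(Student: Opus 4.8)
The plan is to mimic the proof of Theorem~\ref{idea} almost verbatim, the only new ingredient being that the divisibility criterion ``$F_n\mid F_m$ iff $n\mid m$'' must be replaced by the criterion for when a Lucas number divides a Fibonacci number, namely Lemma~\ref{ABClucas}(i): for $n\geq 2$, $L_n\mid F_m$ if and only if $2n\mid m$. This is precisely why the modulus $a$ is taken to be $2[n,n+1,\ldots,n+k]$ rather than $[n,n+1,\ldots,n+k]$, and why the hypothesis $n\geq 2$ is imposed.

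For part~(i), I would first observe that for each $0\leq i\leq k$ we have $n+i\geq 2$ and $n+i\mid [n,n+1,\ldots,n+k]$, hence $2(n+i)\mid 2[n,n+1,\ldots,n+k]=a$. By Lemma~\ref{ABClucas}(i) this gives $L_{n+i}\mid F_a$ for every $i$, so $[L_n,L_{n+1},\ldots,L_{n+k}]\mid F_a$. Writing $b=f_k(n)\,[L_n,L_{n+1},\ldots,L_{n+k}]$ and using $F_a\mid F_{aj}$ for every $j\geq 1$, we conclude $b\mid f_k(n)F_{aj}$ for every $j\geq 1$.

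For part~(ii), set $z(b)=\ell$, so $b\mid F_\ell$ and in particular $L_{n+i}\mid F_\ell$ for all $0\leq i\leq k$. Since $n+i\geq 2$, Lemma~\ref{ABClucas}(i) yields $2(n+i)\mid \ell$ for every $i$, hence $a=2[n,n+1,\ldots,n+k]=[2n,2(n+1),\ldots,2(n+k)]\mid \ell$, so $\ell=aj$ for some $j\in\N$. By the definition of the order of appearance, $j$ is the least positive integer with $b\mid F_{aj}$, i.e.\ with $v_p(b)\leq v_p(F_{aj})$ for every prime $p$. For a prime $p\nmid f_k(n)$, part~(i) gives $v_p(b)\leq v_p(f_k(n)F_{aj})=v_p(F_{aj})$ automatically, so this condition is equivalent to requiring $v_p(b)\leq v_p(F_{aj})$ only for the primes $p$ dividing $f_k(n)$, which is the assertion.

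There is essentially no obstacle here beyond bookkeeping: the argument is structurally identical to that of Theorem~\ref{idea}, and the sole subtlety — the factor $2$ in $a$ together with the switch from the Fibonacci--Fibonacci to the Lucas--Fibonacci divisibility criterion — is dictated by, and handled by, Lemma~\ref{ABClucas}(i). As in the Fibonacci case, the real computational burden is deferred to the subsequent results: the explicit form of $f_k(n)$ coming from Lemma~\ref{lcmL}, and the case analysis of $v_2$, $v_3$, and $v_5$ of $b$ and $F_{aj}$ carried out by means of Lemma~\ref{p-adicl}.
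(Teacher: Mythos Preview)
Your proposal is correct and is exactly the argument the paper has in mind: the paper does not spell out a proof of Theorem~\ref{ideal}, merely remarking that one ``adjust[s] Theorem~\ref{idea} a little,'' and your write-up carries out precisely that adjustment, replacing the criterion $F_n\mid F_m\Leftrightarrow n\mid m$ by Lemma~\ref{ABClucas}(i) and accounting for the factor $2$ in $a$ accordingly.
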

By Lemmas \ref{lcmL}, \ref{ABClucas}, \ref{p-adicl}, and Theorem \ref{ideal}, we can calculate $z(L_nL_{n+1}\cdots L_{n+k})$ for every $1\leq k\leq 6$. Below is the formula of $z(L_nL_{n+1}\cdots L_{n+k})$ when $k = 4, 5 ,6$.
\begin{theorem}\label{mainlucas}
Let $n\geq 1$, $a = 2[n,n+1,\ldots,n+k]$, and $b = L_nL_{n+1}\cdots L_{n+k}$. Then the following statements hold.
\begin{itemize}
\item[(i)] If $k = 4$, then $z(b) =
\begin{cases}
3a, & \text{if $n\equiv 2,14, 18, 30 \pmod {36}$}; \\
a, & \text{otherwise}.
\end{cases}$ 
\item[(ii)] If $k = 5$, then $z(b) =\begin{cases}
3a, & \text{if $n\equiv1,2,13,14,17,18,29,30 \pmod {36}$}; \\
a, & \text{otherwise}.
\end{cases}$
\item[(iii)] If $k = 6$, then $z(b) = \begin{cases}
3a, & \text{if $n\equiv1,2,12,13,14,16,17,18,28,29 \pmod {36}$};\\
a, & \text{otherwise}.
\end{cases}$
\end{itemize}
\end{theorem}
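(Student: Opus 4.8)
The plan is to apply Theorem \ref{ideal} directly, which reduces the computation of $z(b)$ for $b = L_nL_{n+1}\cdots L_{n+k}$ to finding the smallest $j$ such that $v_p(b) \leq v_p(F_{aj})$ for every prime $p$ dividing $f_k(n)$, where $a = 2[n,n+1,\ldots,n+k]$. The first step is to identify $f_k(n)$ from Lemma \ref{lcmL}: for $k=4$ we get $f_4(n) = F_{(n-2,4)}$ or $2F_{(n-2,4)}$ depending on $n \bmod 3$; for $k=5$ we get $f_5(n) = 6$ or $2$; and for $k=6$ we get $f_6(n) = 3\cdot 2^{v_2(L_n)+1}$ or $2^{v_2(L_n)+1}$. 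In every case the only primes dividing $f_k(n)$ are $2$ and $3$ (note $F_{(n-2,4)} \in \{1, 3\}$ since $F_1 = F_2 = 1$, $F_4 = 3$), so we need only control $v_2(b)$, $v_3(b)$, $v_2(F_{aj})$, and $v_3(F_{aj})$. A useful preliminary observation is that $6 \mid a$ always: indeed $a = 2[n,\ldots,n+k]$ with $k \geq 3$, and among any four consecutive integers one is divisible by $3$, so $3 \mid [n,\ldots,n+k] \mid a/2$, hence $6 \mid a$ and in fact $12 \mid a$ when $k \geq 3$.

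The second step is to compute the relevant $p$-adic orders. For $v_2(F_{aj})$ and $v_3(F_{aj})$, since $6 \mid aj$ for all $j$, Lemma \ref{p-adic} gives $v_2(F_{aj}) = v_2(aj) + 2 = v_2(a) + v_2(j) + 2$, and since $4 \mid a$ we get $v_3(F_{aj}) = v_3(aj) + 1 = v_3(a) + v_3(j) + 1$. The value $v_2(a)$ and $v_3(a)$ come from the explicit least-common-multiple formulas in Lemma \ref{lcmN} (applied to $[n,\ldots,n+k]$ and multiplied by $2$), expressed in terms of $n \bmod$ small powers of $2$ and $3$. For $v_2(b)$ and $v_3(b)$ one sums the contributions $v_2(L_{n+i})$ and $v_3(L_{n+i})$ over $0 \leq i \leq k$ using Lemma \ref{p-adicl}: the $2$-adic order of $L_m$ depends only on $m \bmod 6$ (contributing $0$, $1$, or $2$), and the $3$-adic order of $L_m$ is nonzero only when $m \equiv 2 \pmod 4$ (since $z(3) = 4$, which is even, and $L_m$ picks up a factor of $3$ exactly when $m \equiv 2 \pmod 4$), in which case $v_3(L_m) = v_3(m+2) + 1$ roughly. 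Assembling these over a window of $k+1$ consecutive indices gives $v_2(b)$ and $v_3(b)$ as functions of $n$ modulo a suitable modulus.

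The third step is the casework: for each residue of $n$ modulo $\mathrm{lcm}$ of the moduli appearing (which turns out to be $36$ for all three cases $k = 4, 5, 6$, since $36 = 4 \cdot 9$ captures the $2$-adic window of length $\leq 7$ — really mod $8$ is more than enough for the window, but mod $4$ suffices for $v_3$ of the Lucas factors — and the $3$-adic behavior), one determines the minimal $j$ with $v_2(j) \geq v_2(b) - v_2(a) - 2$ and $v_3(j) \geq v_3(b) - v_3(a) - 1$. Because $v_2(a)$ already absorbs most of the $2$-adic content of the product (this is precisely what the $[F_n,\ldots]$ and $[L_n,\ldots]$ formulas are engineered to do), the $2$-adic inequality will hold with $j$ odd in every case, so the only obstruction is $3$-adic, forcing either $j = 1$ or $j = 3$. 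One then checks that $v_3(b) - v_3(a) - 1 = 1$ exactly on the stated residue classes mod $36$ and $\leq 0$ otherwise, giving $z(b) = 3a$ or $z(b) = a$ respectively. The main obstacle will be the bookkeeping in Step 2: correctly tallying $v_2(L_{n+i})$ and especially $v_3(L_{n+i})$ across the sliding window while simultaneously tracking $v_2$ and $v_3$ of the combinatorial denominator $a$, and verifying that the $2$-adic surplus is always nonnegative so that $3$ is genuinely the only prime that can force $j > 1$. Once that is in hand, the identification of the residue classes mod $36$ is a finite check. Since this mirrors the Fibonacci proofs of Theorems \ref{order4}, \ref{order5}, \ref{order6}, we would present only the $p$-adic valuation tables and the resulting case analysis, omitting the routine arithmetic.
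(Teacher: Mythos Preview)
Your proposal is correct and follows essentially the same approach as the paper, which only writes out part (ii) in detail: apply Theorem \ref{ideal}, read off $f_k(n)$ from Lemma \ref{lcmL} to see that only the primes $2$ and $3$ matter, verify that the $2$-adic inequality $v_2(b)\le v_2(F_{aj})$ is automatic (the paper checks $v_2(b)=3<4\le v_2(F_{aj})$ for $k=5$), and then reduce to a $3$-adic case analysis in $n$ modulo $12$ and $9$, which combines to modulus $36$. One small slip in your sketch: when $m\equiv 2\pmod 4$, Lemma \ref{p-adicl} gives $v_3(L_m)=v_3(m)+v_3(F_4)=v_3(m)+1$, not $v_3(m+2)+1$; this is the formula you will actually use in the tabulation.
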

Remark that our formula for $z(L_nL_{n+1}L_{n+2}L_{n+3}L_{n+4})$ may look different from that of Marques and Trojovsk\'y \cite{MTr} but it is actually the same after substituting $a = 2[n,n+1,\ldots,n+4]$ using Lemma \ref{lcmN}. We would like this section to be informative but not too long, so we only give a proof of part (ii) of Theorem \ref{mainlucas}.

\begin{proof}[Proof of Part (ii) of Theorem \ref{mainlucas}.]
It is easy to check that the result holds for $n=1,2$. So assume that $n\geq 3$. By Lemma \ref{lcmL} and Theorem \ref{ideal}, we obtain that $b\mid \ell F_{aj}$ for every $j\geq 1$ where $\ell = 2,6$. So we need to consider only $v_2$ and $v_3$ of $b$ and $F_{aj}$. Remark that $4\mid aj$ and $3\mid aj$. So by Lemma \ref{p-adic}, we obtain $v_2(F_{aj}) = v_2(aj)+2\geq 4$. For $n\equiv 0\pmod 3$, we obtain by Lemma \ref{p-adicl} that $v_2(b) = v_2(L_n)+v_2(L_{n+3}) = 3$. Similarly, if $n\equiv 1\pmod 3$, then $v_2(b) = v_2(L_{n+2})+v_2(L_{n+5}) = 3$, and if $n\equiv 2\pmod3$, then $v_2(b) = v_2(L_{n+1})+v_2(L_{n+4}) = 3$. So in any case, 
\begin{equation}\label{eqstarthm3.5}
\text{$v_2(b) = 3 < v_2(F_{aj})$ for every $j\geq 1$.}
\end{equation}
 In addition,
\begin{itemize}
\item[(a)] if $n\equiv 0\pmod 4$, then $v_3(b) = v_3(L_{n+2}) = v_3(n+2)+1$,
\item[(b)] if $n\equiv 1\pmod 4$, then $v_3(b) = v_3(L_{n+1})+v_3(L_{n+5}) = v_3(n+1)+v_3(n+5)+2$, 
\item[(c)] if $n\equiv 2\pmod 4$, then $v_3(b) = v_3(L_n)+v_3(L_{n+4}) = v_3(n)+v_3(n+4)+2$,
\item[(d)] if $n\equiv 3\pmod 4$, then $v_3(b) = v_3(L_{n+3}) = v_3(n+3)+1$.
\end{itemize}
By Lemmas \ref{p-adic} and \ref{lcmN}, we obtain the following.
\begin{itemize}
\item[(e)] If $n\equiv 0\pmod 3$, then $v_3(F_{aj}) = v_3(aj)+1 = v_3(a)+v_3(j)+1$ $= v_3(n)+v_3(n+3)-1+v_3(j)+1 = v_3(n)+v_3(n+3)+v_3(j)$.
\item[(f)] If $n\equiv 1\pmod 3$, then $v_3(F_{aj}) = v_3(n+2)+v_3(n+5)+v_3(j)$.
\item[(g)] If $n\equiv 2\pmod 3$, then $v_3(F_{aj}) = v_3(n+1)+v_3(n+4)+v_3(j)$.
\end{itemize}
\textbf{Case 1:} $n\equiv 0,3\pmod 4$. Then by Theorem \ref{ideal}, Lemma \ref{lcmL}, and \eqref{eqstarthm3.5}, we can choose $j=1$ and obtain $z(b) = a$.\\
\textbf{Case 2:} $n\equiv 1,2\pmod 4$. Then by Theorem \ref{ideal}, Lemma \ref{lcmL}, and \eqref{eqstarthm3.5}, we only need to check $v_3$ of $b$ and $F_{aj}$.\\
\textbf{Case 2.1:} $n\equiv 1\pmod 4$ and $n\equiv 0\pmod 3$. Then by (b) and (e), we obtain 
\begin{align*}
v_3(b) = v_3(n+1)+v_3(n+5)+2 = 2 \leq v_3(n)+v_3(n+3)+v_3(j) = v_3(F_{aj})
\end{align*}
for every $j$. So we choose $j=1$ and obtain $z(b) = a$.\\
\textbf{Case 2.2:} $n\equiv 2\pmod 4$ and $n\equiv 0\pmod 3$. Then by (c) and (e), $v_3(b) = v_3(n)+2$ and $v_3(F_{aj}) = v_3(n)+v_3(n+3)+v_3(j)$. So $v_3(F_{aj})\geq v_3(b)$ if and only if $v_3(n+3)+v_3(j) \geq 2$. Therefore 
\begin{itemize}
\item[(i)] if $n+3\equiv 0\pmod 9$, then we choose $j=1$ and obtain $z(b) = a$,
\item[(ii)] if $n+3\equiv 3,6\pmod 9$, then we choose $j=3$ and obtain $z(b) = 3a$.
\end{itemize}
\textbf{Case 2.3:} $n\equiv 2\pmod 4$ and $n\equiv 1\pmod 3$. Similar to Case 2.1, we obtain $z(b) = a$.\\
\textbf{Case 2.4:} $n\equiv 1\pmod 4$ and $n\equiv 1\pmod 3$. This case is similar to Case 2.2 and we obtain that $v_3(F_{aj}) \geq v_3(b)$ if and only if $v_3(n+2)+v_3(j) \geq 2$. Therefore
\begin{itemize}
\item[(i)] if $n+2\equiv 0\pmod 9$, then $z(b) = a$,
\item[(ii)] if $n+2\equiv 3,6\pmod 9$, then $z(b) = 3a$.
\end{itemize} 
\textbf{Case 2.5:} $n\equiv 1\pmod 4$ and $n\equiv 2\pmod 3$. This case is similar to Cases 2.2 and 2.4 and we obtain that $v_3(F_{aj}) \geq v_3(b)$ if and only if $v_3(n+4)+v_3(j) \geq 2$. So 
\begin{itemize}
\item[(i)] if $n+4\equiv 0\pmod 9$, then $z(b) = a$,
\item[(ii)] if $n+4\equiv 3,6\pmod 9$, then $z(b) = 3a$.
\end{itemize} 
\textbf{Case 2.6:} $n\equiv 2\pmod 4$ and $n\equiv 2\pmod 3$. This case is similar to Cases 2.2, 2.4, and 2.5 and we obtain that $v_3(F_{aj}) \geq v_3(b)$ if and only if $v_3(n+1)+v_3(j) \geq 2$. So
\begin{itemize}
\item[(i)] if $n+1\equiv 0\pmod 9$, then $z(b) = a$,
\item[(ii)] if $n+1\equiv 3,6\pmod 9$, then $z(b) = 3a$.
\end{itemize}
Combining the result in each case, we obtain the desired formula.
\end{proof}

 
\textbf{Acknowledgments} The first author receives a scholarship from DPST of IPST Thailand. The second author currently receives financial support jointly from The Thailand Research Fund and Faculty of Science, Silpakorn University, grant number RSA5980040. Correspondence should be addressed to Prapanpong Pongsriiam: prapanpong@gmail.com.

\end{document}